\documentclass[11pt]{amsart}
\usepackage{amscd,amsmath,amssymb,amsthm,latexsym,esint,cite}
\usepackage{verbatim}
\usepackage[left=2.65cm,right=2.65cm,top=2.8cm,bottom=2.8cm]{geometry}

\usepackage{color,enumitem,graphicx}
\usepackage[colorlinks=true,urlcolor=blue, citecolor=red,linkcolor=blue,
linktocpage,pdfpagelabels,bookmarksnumbered,bookmarksopen]{hyperref}

\usepackage[hyperpageref]{backref}
\usepackage[english]{babel}

\newcommand{\abs}[1]{\left|#1\right|}

\newcommand{\bdry}[1]{\partial #1}

\newcommand{\bgset}[1]{\big\{#1\big\}}

\newcommand{\A}{{\mathcal A}}

\newcommand{\F}{{\mathcal F}}

\newcommand{\dint}{\ds{\int}}

\newcommand{\ds}[1]{\displaystyle #1}

\newcommand{\eps}{\varepsilon}

\renewcommand{\L}{{\mathcal L}}
\newcommand{\M}{{\mathcal M}}
\newcommand{\N}{\mathbb N}

\newcommand{\norm}[2][]{\left\|#2\right\|_{#1}}
\renewcommand{\O}{\mathcal{O}}
\renewcommand{\o}{\text{o}}
\newcommand{\PS}[1]{$(\text{PS})_{#1}$}

\newcommand{\pnorm}[2][]{\if #1'' \left|#2\right|_p \else \left|#2\right|_{#1} \fi}

\newcommand{\R}{\mathbb R}

\newcommand{\seq}[1]{\left(#1\right)}
\newcommand{\set}[1]{\left\{#1\right\}}

\newcommand{\spnorm}[2][]{\if #1'' |#2|_p \else |#2|_{#1} \fi}

\newcommand{\vol}[1]{\if #1'' \L \else \L(#1) \fi}

\newcommand{\wto}{\rightharpoonup}
\newcommand{\Z}{\mathbb Z}

\newtheorem{lemma}{Lemma}[section]

\newtheorem{theorem}[lemma]{Theorem}
\newtheorem{corollary}{Corollary}[section]

\theoremstyle{definition}

\theoremstyle{remark}

\newtheorem{remark}[lemma]{Remark}
\theoremstyle{definition}

\newenvironment{enumroman}{\begin{enumerate}
		
		}{\end{enumerate}}

\numberwithin{equation}{section}

\title[Problems with fractional Trudinger-Moser nonlinearity]{Bifurcation results for problems with \\ fractional Trudinger-Moser nonlinearity}

\author[K.\ Perera]{Kanishka Perera}
\author[M.\ Squassina]{Marco Squassina}

\address[K. Perera]{Department of Mathematical Sciences
	\newline\indent
	Florida Institute of Technology
	\newline\indent
	150 W University Blvd, Melbourne, FL 32901, USA}
\email{kperera@fit.edu}

\address[M.\ Squassina]{Dipartimento di Matematica e Fisica \newline\indent
	Universit\`a Cattolica del Sacro Cuore \newline\indent
	Via dei Musei 41, 25121 Brescia, Italy}
\email{marco.squassina@unicatt.it}

\thanks{The second author is member of {Gruppo Nazionale per l'Analisi 
		Ma\-te\-ma\-ti\-ca, la Probabilit\`a e le loro Applicazioni} (GNAMPA) of the {\em Istituto Nazionale di Alta Matematica} (INdAM)}

\subjclass[2010]{Primary 35J92, Secondary 35P30}
\keywords{Fractional Trudinger-Moser embedding, exponential nonlinearity, existence of solutions}

\begin{document}

\begin{abstract}
By using a suitable topological argument based on cohomological linking and by exploiting a Trudinger--Moser inequality
in fractional spaces recently obtained, we  prove existence of multiple solutions for a problem involving 
the nonlinear fractional laplacian and a related critical exponential nonlinearity. This extends results in the literature for the $N$-Laplacian operator.
\end{abstract}

\maketitle

%

\section{Introduction}
\subsection{Overview}
Let $\Omega$ be a bounded domain in $\R^N$ with $N \ge 2$ and with Lipschitz boundary $\bdry{\Omega}$. 
We denote by $\omega_{N-1}$ the measure of the unit sphere in $\R^N$ and $N'=N/(N-1)$. Since the time when the 
Trudinger-Moser inequality was first proved (cf.\ \cite{trud,moser,carleson})
$$
\sup_{u\in W^{1,N}_0(\Omega),\,\,\|\nabla u\|_N\leq 1}\int_{\Omega} e^{\alpha_N |u|^{N'}}dx<+\infty,\qquad
\alpha_N=N\omega_{N-1}^{1/(N-1)},
$$ 
existence and multiplicity of solutions for various nonlinear problems with exponential 
nonlinearity were investigated. For instance, Adimurthi \cite{adimurt} proved the existence of a positive 
solution to the quasi-linear elliptic problem
\begin{equation} \label{1-local}
	\left\{\begin{aligned}
		- \Delta_N u & = \lambda\, |u|^{N-2}\, u\, e^{\, |u|^{N'}} && \text{in } \Omega\\
		u & = 0 && \text{on } \partial\Omega,
	\end{aligned}\right.
\end{equation}
where $\Delta_N u:={\rm div}(|\nabla u|^{N-2}\nabla u)$ is the $N$-Laplacian operator for $0<\lambda<\lambda_1(N)$, being 
$\lambda_1(N)>0$ the first eigenvalue of $\Delta_N$ with Dirichlet boundary conditions, see also \cite{doO}. The case $N=2$ was
investigated in \cite{defig1,defig2}, where the existence of a nontrivial solution was found for $\lambda\geq \lambda_1$. Recently,
in \cite{yype} it was proved that problem \eqref{1-local} admits a nontrivial weak solution whenever $\lambda>0$ is {\em not} an eigenvalue 
of $-\Delta_N$ in $\Omega$ with Dirichlet boundary conditions. In addition in \cite{yype} a bifurcation result for higher (nonlinear) eigenvalues
(which are suitably defined via the cohomological index) is also obtained, yielding in  turn multiplicity results.

The issue of Trudinger-Moser type embeddings for fractional spaces is rather delicate and only quite recently, Parini and Ruf \cite{par-ruf} (see also 
the refinement obtained in \cite{iula0})
provided a partial result in the {\em Sobolev-Slobodeckij} space 
$$
W^{s,N/s}_0(\Omega),\quad  \text{$s\in (0,1)$,\, $N\geq 1$},
$$
defined as the completion of $C^\infty_0(\Omega)$ for the norm
\[
\|u\|=[u]_{s,N/s} := \Big(\int_{\R^{2N}} \frac{|u(x) - u(y)|^{N/s}}{|x - y|^{2N}}\, dx dy\Big)^{s/N}.
\]
We also refer the reader to \cite{Oza,KSW,martinaz,iula} for results in a different functional framework, namely
the {\em Bessel potential} spaces $H^{s,p}$.
In fact, they proved that the supremum $\alpha_{N,s}(\Omega)$ of $\alpha \ge 0$ with
\begin{equation}
\label{fracTM}
\sup_{u\in W^{s,N/s}_0(\Omega),\; [u]_{s,N/s} \le 1}\int_{\Omega} e^{\, \alpha\, |u|^{N/(N-s)}}\, dx<+\infty,
\end{equation}
is positive and finite.
Furthermore, they proved the existence of $\alpha_{N,s}^*(\Omega)\geq \alpha_{N,s}(\Omega)$ such that the supremum in \eqref{fracTM} is $+\infty$
for $\alpha>\alpha_{N,s}^*(\Omega)$. On the other hand it still remains {\em unknown} whether 
$$
\alpha_{N,s}(\Omega)=\alpha_{N,s}^*(\Omega).
$$ 
The case $N=1$ and $s=1/2$ was earlier considered in \cite{IaSq} (see also \cite{giacom}), where the authors
study the existence of weak solutions to the problem
	\begin{equation*} 
	-\frac{C_s}{2}\displaystyle\int_{\R}\frac{u(x+y)+u(x-y)-2 u(x)}{|y|^{1+2s}}dy=f(u),\quad\,\,  u\in W_0^{1/2,2}(-1,1),
	\end{equation*}
where $C_s>0$ is a suitable normalization constant. We also mention \cite{JMdO1,JMdO2} for other investigations in the one
dimensional case on the whole space $\R$, facing the problem of the lack of compactness. In particular in \cite{JMdO2}, the existence 
of {\em ground state} solutions for the problem
	\begin{equation*} 
-\frac{C_s}{2}\displaystyle\int_{\R}\frac{u(x+y)+u(x-y)-2 u(x)}{|y|^{1+2s}}dy+u=f(u),\quad\,\,  u\in W_0^{1/2,2}(\R),
\end{equation*}
was proved, where $f$ is a Trudinger–Moser critical growth nonlinearity.

To the authors' knowledge, in the framework of the {\em Sobolev-Slobodeckij} spaces $W^{s,N/s}_0(\Omega)$, fractional counterparts
of the local quasilinear $N$-Laplacian problem \eqref{1-local} were not previously tackled in the literature. This is precisely the goal of this manuscript.

\subsection{The main result}
Let $N\geq 1$ and $s\in (0,1)$. In the following, the standard norm for the $L^p$ space will always be denoted by $|\cdot|_p$.
For $\lambda>0,$ we consider the quasilinear problem
\begin{equation} \label{1}
	\left\{\begin{aligned}
		(- \Delta)_{N/s}^s\, u & = \lambda\, |u|^{(N-2s)/s}\, u\, e^{\, |u|^{N/(N-s)}} && \text{in } \Omega\\
		u & = 0 && \text{in } \R^N \setminus \Omega,
	\end{aligned}\right.
\end{equation}
where $(- \Delta)_{N/s}^s$ is the nonlinear nonlocal operator defined on smooth functions by
\[
(- \Delta)_{N/s}^s\, u(x) := 2 \lim_{\eps \searrow 0} \int_{\R^N \setminus B_\eps(x)} \frac{|u(x) - u(y)|^{(N-2s)/s}\, (u(x) - u(y))}{|x - y|^{2N}}\, dy, \quad x \in \R^N.
\]
We refer the interested reader to \cite{rev-paper} and the references therein for an overview on recent progresses on
existence, nonexistence and regularity results for equations involving the {\em fractional $p$-laplacian} operator $(-\Delta)^s_p$, $p>1$. 
The standard sequence of eigenvalues for $(- \Delta)_{N/s}^s$ via the {\em Krasnoselskii genus} does not furnish enough information on the structure 
of sublevels and thus the eigenvalues will be introduced via the {\em cohomological index}. 
We consider critical values of the functional
\[
\Psi(u) := \frac{1}{\pnorm[N/s]{u}^{N/s}}, \quad u \in \M,\quad \M: = \bgset{u \in W^{s,N/s}_0(\Omega) : \norm{u} = 1}.
\]
Let $\F$ be the class of symmetric sets of $\M$, $i(M)$ the $\Z_2$-cohomological index of 
a $M\subset\F$ and set
\[
\lambda_k := \inf_{\substack{M \in \F\\[1pt] i(M) \ge k}}\, \sup_{u \in M}\, \Psi(u), \quad k \ge 1,
\qquad (\lambda_k\to+\infty)
\]
Consider also the positive constant
\[
\mu_{N,s}(\Omega):= \alpha_{N,s}(\Omega)^{(N-s)/N}\! \left(\frac{N}{s\, \vol{\Omega}}\right)^{s/N},
\]
being $\vol{}$ the Lebesgue measure in $\R^N$. The following is our main result
\begin{theorem}
	Assume that $\lambda_k \le \lambda < \lambda_{k+1} = \cdots = \lambda_{k+m} < \lambda_{k+m+1}$ for some $k, m \ge 1$ and
	\begin{equation*}
	\lambda + \mu_{N,s}(\Omega)\, \lambda^{(N-s)/N} > \lambda_{k+1},
	\end{equation*}
	then problem \eqref{1} has $m$ distinct pairs of nontrivial solutions $\pm\, u^\lambda_j,\, j = 1,\dots,m$ such that $u^\lambda_j \to 0$ as $\lambda \nearrow \lambda_{k+1}$. In particular, if
	\[
	\lambda_k \le \lambda < \lambda_{k+1} < \lambda + \mu_{N,s}(\Omega)\, \lambda^{(N-s)/N}
	\]
	for some $k \ge 1$, then problem \eqref{1} has a nontrivial solution.
\end{theorem}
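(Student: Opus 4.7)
The plan is to extend the abstract critical point framework based on the $\Z_2$-cohomological index developed for the local $N$-Laplacian in \cite{yype} to the fractional setting, using \eqref{fracTM} in place of the classical Trudinger--Moser inequality. The starting point is the $C^1$ energy
\[
E_\lambda(u) := \frac{s}{N}\, \norm{u}^{N/s} - \lambda\int_\Omega F(u)\, dx,\qquad F(t):=\int_0^t|\tau|^{(N-2s)/s}\,\tau\, e^{|\tau|^{N/(N-s)}}\, d\tau,
\]
on $W^{s,N/s}_0(\Omega)$, whose critical points are exactly the weak solutions of \eqref{1}. An application of \eqref{fracTM} ensures $F(u)\in L^1(\Omega)$ for every $u\in W^{s,N/s}_0(\Omega)$ and that $E_\lambda\in C^1$; in addition the nonlinearity satisfies an $(N/s)$-Ambrosetti--Rabinowitz condition of the form $(s/N)f(t)\,t\ge F(t)$ for $|t|$ large.

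The second step is to exhibit a cohomological linking geometry between sublevel and superlevel sets of $\Psi$. Using the variational characterisation of the $\lambda_j$ via the cohomological index, together with the assumption $\lambda_{k+m}<\lambda_{k+m+1}$, one selects a symmetric set $A\subset\bgset{u\in\M:\Psi(u)\le\lambda_{k+m}}$ with $i(A)=k+m$ together with a symmetric set $B\subset\bgset{u\in\M:\Psi(u)\ge\lambda_{k+1}}$ satisfying $i(\M\setminus B)\le k$. Passing to the cones in $W^{s,N/s}_0(\Omega)$, one fixes a small radius $\rho>0$ and a large scaling $R>0$, and checks that the inequalities $\sup_{R\cdot CA}E_\lambda\le 0<\inf_{S_\rho\cap CB}E_\lambda$ hold for every $\lambda<\lambda_{k+1}$. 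The cones $\partial(R\cdot CA)$ and $S_\rho\cap CB$ then cohomologically link in the Fadell--Rabinowitz sense, and the corresponding abstract multiplicity theorem produces a candidate family of $m$ pairs of critical values $c_1(\lambda)\le\cdots\le c_m(\lambda)$.

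The central technical obstacle is to verify the Palais--Smale (Cerami) condition at every level strictly below the Trudinger--Moser threshold
\[
c^* := \frac{s}{N}\, \alpha_{N,s}(\Omega)^{(N-s)/s}.
\]
For a PS sequence $(u_j)$ at level $c<c^*$, the Ambrosetti--Rabinowitz identity applied to $E_\lambda(u_j)-\frac{s}{N}\langle E_\lambda'(u_j),u_j\rangle$ yields boundedness in $W^{s,N/s}_0(\Omega)$; extracting a weak limit $u$, one checks that $u$ is a critical point of $E_\lambda$, and the sharp energy restriction $c<c^*$ forces $\limsup_j \norm{u_j-u}^{N/s}<\alpha_{N,s}(\Omega)$, so that \eqref{fracTM} combined with Vitali's convergence theorem promotes weak to strong convergence in the nonlinear term and hence of $(u_j)$ itself. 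This step is the most delicate of the proof: no complete Lions-type concentration--compactness lemma is presently available in $W^{s,N/s}_0$, so one is forced to exploit \eqref{fracTM} directly together with the fine energy estimate described next.

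Finally, the arithmetic hypothesis $\lambda+\mu_{N,s}(\Omega)\,\lambda^{(N-s)/N}>\lambda_{k+1}$ is precisely what is needed to guarantee that all min-max values lie strictly below $c^*$: inserting a rescaled test function drawn from the eigenspace associated with $\lambda_{k+1}$ into $E_\lambda$ and using the explicit definition of $\mu_{N,s}(\Omega)$ yields, after optimisation in the scaling parameter, the strict inequality $c_j(\lambda)<c^*$. The PS condition from the previous step then applies and produces $m$ distinct pairs of nontrivial critical points $\pm u^\lambda_j$. The asymptotics $u^\lambda_j\to 0$ as $\lambda\nearrow\lambda_{k+1}$ follow because $c_j(\lambda)\to 0$ at the threshold, together with the standard lower bound $\norm{u}^{N/s}\le C\, E_\lambda(u)$ on the set of nontrivial critical points, deduced from $\langle E_\lambda'(u),u\rangle=0$ and the Ambrosetti--Rabinowitz inequality. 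The single-solution assertion is the special case $m=1$.
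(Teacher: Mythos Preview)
Your outline matches the paper's strategy closely: the same energy functional, the same linking scheme through the cohomological index and the abstract theorem of \cite{yype}, and the same Palais--Smale threshold $c^*=\frac{s}{N}\,\alpha_{N,s}(\Omega)^{(N-s)/s}$. Two points, however, are genuine gaps rather than mere imprecisions.

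\textbf{Compactness of the linking set.} In the abstract critical point theorem you invoke, the set $K_0$ (your $A$) must be \emph{compact}, not merely symmetric with the right index. The sublevel set $\{\Psi\le\lambda_{k+m}\}\subset\M$ has index $k+m$ but is not compact in $W^{s,N/s}_0(\Omega)$, and you give no mechanism for producing a compact subset of the same index. The paper devotes a separate result (its Theorem~\ref{Theorem 3}) to this: it builds an odd continuous map $\varphi:\Psi^{\lambda_{k+m}}\to\M$ that factors through the compact embedding $W^{s,N/s}_0(\Omega)\hookrightarrow L^{N/s}(\Omega)$ and a nonlinear solution operator $J$ (Lemma~\ref{Lemma 5}), and shows $\varphi(\Psi^{\lambda_{k+m}})$ is compact, contained in $\Psi^{\lambda_{k+m}}$, and still has index $k+m$. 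Without this construction the pseudo-index machinery does not apply.

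\textbf{The asymptotics $u^\lambda_j\to 0$.} The inequality you claim, $\norm{u}^{N/s}\le C\,E_\lambda(u)$ for nontrivial critical points, does not follow from $\langle E_\lambda'(u),u\rangle=0$ together with the Ambrosetti--Rabinowitz inequality $tf(t)\ge (N/s)F(t)$: those two facts only yield $E_\lambda(u)\ge 0$. The paper instead uses the sharper pointwise bound $\frac{s}{N}\,tf(t)-F(t)\ge \frac{s^2}{N^2}\,|t|^{N^2/s(N-s)}$ to deduce $\int_\Omega |v_n|^{N^2/s(N-s)}\,dx\to 0$ from $E_{\nu_n}(v_n)\to 0$, hence the weak limit is zero; then $\int_\Omega F(v_n)\,dx\to 0$ via a truncation argument (its Lemma~\ref{Lemma 2}), and finally $\norm{v_n}\to 0$ from the energy identity. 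Your shortcut is not valid.

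A smaller correction: contrary to what you write, a Lions-type concentration--compactness lemma \emph{is} proved in the paper (its Theorem~\ref{Theorem 1}) and is the key tool in the Palais--Smale analysis, precisely to obtain uniform integrability of $u_j f(u_j)$ when the weak limit $u$ is nonzero. Your proposed route through ``\eqref{fracTM} directly together with Vitali'' is, once unpacked, the same argument, but it does require first showing $u\ne 0$ (otherwise the improved exponent is unavailable) and then controlling $\norm{u_j-u}^{N/(N-s)}$ below $\alpha_{N,s}(\Omega)$ via Br\'ezis--Lieb and the critical-point identity $\norm{u}^{N/s}=\lambda\int uf(u)$.
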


This result, which follows from the results in Section~\ref{bif-sect}, is nontrivial since 
the classical linking arguments of \cite{defig1,defig2} cannot be used in the quasi-linear setting.
Instead the abstract machinery developed in \cite{yype} will be applied.
We also would like to stress that, since the Trudinger-Moser embedding \eqref{fracTM} still holds with nonoptimal exponent (contrary to the local case),
it is not clear how to prove Brezis-Nirenberg type results, namely that  problem~\eqref{1} 
admits a nontrivial weak solution whenever $\lambda>0$ is not an eigenvalue of $(- \Delta)_{N/s}^s$.

\section{Preliminaries}
As anticipated in the introduction, we work in the fractional Sobolev space $W^{s,N/s}_0(\Omega)$, 
defined as the completion of $C^\infty_0(\Omega)$ with respect to the Gagliardo seminorm
\[
[u]_{s,N/s} = \left(\int_{\R^{2N}} \frac{|u(x) - u(y)|^{N/s}}{|x - y|^{2N}}\, dx dy\right)^{s/N}.
\]
Furthermore, since $\bdry{\Omega}$ is assumed to be Lipschitz, we have (cf.\ \cite[Proposition B.1]{stability})
\[
W^{s,N/s}_0(\Omega) = \set{u \in L^{N/s}(\R^N) : [u]_{s,N/s} < \infty,\; u = 0 \text{ a.e.\! in } \R^N \setminus \Omega}.
\]
A function $u \in W^{s,N/s}_0(\Omega)$ is a {\em weak solution} of problem \eqref{1} if
\begin{multline*}
	\int_{\R^{2N}} \frac{|u(x) - u(y)|^{(N-2s)/s}\, (u(x) - u(y))\, (v(x) - v(y))}{|x - y|^{2N}}\, dx dy\\[5pt]
	= \lambda \int_\Omega |u|^{(N-2s)/s}\, u\, e^{\, |u|^{N/(N-s)}}\, v\, dx, \quad \forall v \in W^{s,N/s}_0(\Omega).
\end{multline*}
As proved in \cite[Proposition 2.12]{ibero}, a weak solution turns into a {\em poinwise} solution
if $u\in C^{1,\gamma}_{{\rm loc}}$ for some $\gamma\in(0,1)$ 
sufficiently close to 1.
The integral on the right-hand side is well-defined in view of 
\cite[Proposition 3.2]{par-ruf} and the H\"older inequality. 
Weak solutions coincide with critical points of the $C^1$ functional 
\[
\Phi(u) = \frac{s}{N} \int_{\R^{2N}} \frac{|u(x) - u(y)|^{N/s}}{|x - y|^{2N}}\, dx dy - \lambda \int_\Omega F(u)\, dx, \quad u \in W^{s,N/s}_0(\Omega),
\]
where $F(t) = \int_0^t f(\tau)\, d\tau$ and $f(t) = |t|^{(N-2s)/s}\, t\, e^{\, |t|^{N/(N-s)}}$.

\noindent
We recall that $W^{s,N/s}_0(\Omega)$ is uniformly convex, and hence reflexive. Indeed, for $u \in W^{s,N/s}_0(\Omega)$, let
\[
\widetilde{u}(x,y) := \frac{u(x) - u(y)}{|x - y|^{2s}},\quad (x,y)\in\R^{2N}.
\]
Then the mapping $u \mapsto \widetilde{u}$ is a linear isometry from $W^{s,N/s}_0(\Omega)$ to $L^{N/s}(\R^{2N})$, so the uniform convexity of $L^{N/s}(\R^{2N})$ gives the conclusion.

We also have the following Br\'ezis-Lieb lemma in $W^{s,N/s}_0(\Omega)$.

\begin{lemma} \label{Lemma 3}
	If $\seq{u_j}$ is bounded in $W^{s,N/s}_0(\Omega)$ and converges to $u$ a.e.\! in $\Omega$, then
	\[
	\norm{u_j}^{N/s} - \norm{u_j - u}^{N/s} \to \norm{u}^{N/s}\!.
	\]
\end{lemma}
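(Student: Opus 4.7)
The plan is to reduce this to the classical Brezis--Lieb lemma in $L^{N/s}(\R^{2N})$ by exploiting the isometric embedding $u \mapsto \widetilde{u}$ already introduced in the excerpt, where $\widetilde{u}(x,y) = (u(x)-u(y))/|x-y|^{2s}$. Under this map, $\|u\|^{N/s}$ equals $\int_{\R^{2N}} |\widetilde{u}(x,y)|^{N/s}\, dx\, dy$, so the claim becomes the standard Brezis--Lieb statement for the sequence $\widetilde{u_j}$ in $L^{N/s}(\R^{2N})$.

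First I would establish the two hypotheses needed to invoke the classical Brezis--Lieb lemma for $\widetilde{u_j}$. Boundedness of $\widetilde{u_j}$ in $L^{N/s}(\R^{2N})$ is immediate from the isometry and the assumed boundedness of $u_j$ in $W^{s,N/s}_0(\Omega)$. For the a.e.\! convergence $\widetilde{u_j} \to \widetilde{u}$ on $\R^{2N}$, I would use that $u_j, u$ vanish outside $\Omega$, so the a.e.\! convergence on $\Omega$ upgrades to a.e.\! convergence on $\R^N$. A Fubini argument then gives a full-measure set $E \subset \R^N$ on which $u_j \to u$ pointwise, whence $u_j(x) - u_j(y) \to u(x) - u(y)$ for all $(x,y) \in E \times E$, which is of full measure in $\R^{2N}$. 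Dividing by the fixed factor $|x-y|^{2s}$ yields $\widetilde{u_j}(x,y) \to \widetilde{u}(x,y)$ a.e.\! on $\R^{2N}$.

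Next I would apply the classical Brezis--Lieb lemma in $L^{N/s}(\R^{2N})$ to the sequence $\widetilde{u_j}$, obtaining
\[
\int_{\R^{2N}} |\widetilde{u_j}|^{N/s} dx dy - \int_{\R^{2N}} |\widetilde{u_j} - \widetilde{u}|^{N/s} dx dy \to \int_{\R^{2N}} |\widetilde{u}|^{N/s} dx dy.
\]
Finally, by linearity of the embedding $u \mapsto \widetilde{u}$ we have $\widetilde{u_j} - \widetilde{u} = \widetilde{u_j - u}$, so translating the three integrals back through the isometry gives exactly
\[
\|u_j\|^{N/s} - \|u_j - u\|^{N/s} \to \|u\|^{N/s}.
\]

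There is no real obstacle here; the only point that requires mild care is the passage from a.e.\! convergence on $\Omega$ to a.e.\! convergence on $\R^{2N}$ for the Gagliardo difference quotients, which is handled cleanly by the zero extension outside $\Omega$ together with Fubini. The rest is a mechanical transfer through the linear isometry, and the exponent $N/s > 1$ lies in the range where the classical Brezis--Lieb lemma applies directly.
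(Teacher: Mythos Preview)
Your proposal is correct and follows essentially the same route as the paper's own proof: both transfer the problem to $L^{N/s}(\R^{2N})$ via the linear isometry $u\mapsto\widetilde{u}$ and then invoke the classical Br\'ezis--Lieb lemma. If anything, your version is slightly more explicit, since the paper simply asserts that $\widetilde{u}_j\to\widetilde{u}$ a.e.\ in $\R^{2N}$, whereas you spell out the zero-extension and Fubini justification for this step.
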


\begin{proof}
	Let
	\[
	\widetilde{u}_j(x,y) = \frac{u_j(x) - u_j(y)}{|x - y|^{2s}}, \qquad \widetilde{u}(x,y) = \frac{u(x) - u(y)}{|x - y|^{2s}},
	\]
	and note that $\seq{\widetilde{u}_j}$ is bounded in $L^{N/s}(\R^{2N})$ and converges to $\widetilde{u}$ a.e.\! in $\R^{2N}$. Hence
	\[
	\pnorm[N/s]{\widetilde{u}_j}^{N/s} - \pnorm[N/s]{\widetilde{u}_j - \widetilde{u}}^{N/s} \to \pnorm[N/s]{\widetilde{u}}^{N/s}
	\]
	by the Br\'ezis-Lieb lemma \cite{blieb}, 
	where $\pnorm[N/s]{\cdot}$ denotes the norm in $L^{N/s}(\R^{2N})$, namely the conclusion.
\end{proof}

\noindent
It was shown \cite[Theorem 1.1]{par-ruf} that the supremum $\alpha_{N,s}(\Omega)$ of all $\alpha \ge 0$ such that
\[
\sup \set{\int_\Omega e^{\, \alpha\, |u|^{N/(N-s)}}\, dx : u \in W^{s,N/s}_0(\Omega),\; [u]_{s,N/s} \le 1} < +\infty
\]
satisfies $0 < \alpha_{N,s}(\Omega) < \infty$. The main result of this section is the following theorem, 
which is due to P.L.\ Lions \cite{lio} in the local case $s = 1$.

\begin{theorem} \label{Theorem 1}
	If $\seq{u_j}$ is a sequence in $W^{s,N/s}_0(\Omega)$ with $\norm{u_j} = 1$ for all $j\in\N$ and converging a.e.\! to a nonzero function $u$, then
	\[
	\sup_{j\in\N} \int_\Omega e^{\, \alpha\, |u_j|^{N/(N-s)}}\, dx < +\infty
	\]
	for all $\alpha < \alpha_{N,s}(\Omega)/(1 - \norm{u}^{N/s})^{s/(N-s)}$.
\end{theorem}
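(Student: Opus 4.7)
The proof follows P.L.~Lions' concentration-compactness strategy adapted to the fractional Trudinger--Moser setting. Set $q := N/(N-s)>1$ and $v_j := u_j - u$. Since $\|u_j\|=1$ and $u_j \to u$ a.e.\! in $\Omega$, the Br\'ezis--Lieb type decomposition in Lemma \ref{Lemma 3} yields $\|v_j\|^{N/s} \to 1 - \|u\|^{N/s}$, hence $\|v_j\|^q \to (1-\|u\|^{N/s})^{s/(N-s)}$ as $j\to\infty$. We may assume $\|u\|<1$: weak lower semicontinuity of $\|\cdot\|$ forces $\|u\|\le 1$, and if $\|u\|=1$ the upper bound on $\alpha$ is $+\infty$ and the statement is vacuous.

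The basic analytic tool is the elementary inequality
$$(a+b)^q \;\le\; (1+\eps)\,a^q + C_\eps\, b^q,\qquad a,b\ge 0,\quad \eps>0,$$
valid since $q>1$, with $C_\eps$ depending only on $\eps$ and $q$. Applied pointwise to $|u_j|\le |v_j|+|u|$ it gives $|u_j|^q \le (1+\eps)|v_j|^q + C_\eps |u|^q$. Choose $\eps>0$ and $p>1$ so small that
$$p(1+\eps)\,\alpha\,(1-\|u\|^{N/s})^{s/(N-s)} \;<\; \alpha_{N,s}(\Omega),$$
which is possible by the hypothesis on $\alpha$. H\"older's inequality with conjugate exponents $p,p'$ then yields
$$\int_\Omega e^{\,\alpha|u_j|^q}\,dx \;\le\; \bigg(\int_\Omega e^{\,p(1+\eps)\alpha |v_j|^q}\,dx\bigg)^{\!1/p} \bigg(\int_\Omega e^{\,p'C_\eps\alpha |u|^q}\,dx\bigg)^{\!1/p'}.$$

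To estimate the first factor, normalize $w_j := v_j/\|v_j\|$, so that $|v_j|^q = \|v_j\|^q |w_j|^q$; for $j$ large the coefficient $p(1+\eps)\alpha\|v_j\|^q$ is strictly below $\alpha_{N,s}(\Omega)$ by the calibration above, and the fractional Trudinger--Moser inequality \eqref{fracTM} applied to the unit-norm function $w_j$ gives the required uniform bound. For the second factor $u$ is fixed, so I would approximate $u$ in $W^{s,N/s}_0(\Omega)$ by $\phi\in C_c^\infty(\Omega)$ with $\|u-\phi\|$ so small that $p'C_\eps\alpha\,2^{q-1}\|u-\phi\|^q < \alpha_{N,s}(\Omega)$, and then split via $|u|^q\le 2^{q-1}(|u-\phi|^q+|\phi|^q)$ into a bounded contribution coming from $\phi\in L^\infty(\Omega)$ and a contribution from $(u-\phi)/\|u-\phi\|$ again controlled by \eqref{fracTM}.

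The main subtlety is the calibration of $\eps$ and $p$: the \emph{sharp} asymptotics $\|v_j\|^q \to (1-\|u\|^{N/s})^{s/(N-s)}$ produced by Lemma \ref{Lemma 3} is exactly what allows one to reach the threshold $\alpha_{N,s}(\Omega)/(1-\|u\|^{N/s})^{s/(N-s)}$; any coarser estimate (such as merely $\|v_j\|\le 1$ from weak lower semicontinuity) would give a strictly worse constant and miss the statement.
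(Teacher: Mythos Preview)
Your proof is correct and follows essentially the same strategy as the paper: split $|u_j|\le |u_j-u|+|u|$, separate the $N/(N-s)$-th powers via an elementary convexity inequality, apply H\"older, and then combine the Br\'ezis--Lieb asymptotics from Lemma~\ref{Lemma 3} with the Trudinger--Moser bound applied to the normalized difference $(u_j-u)/\|u_j-u\|$. The only cosmetic differences are the specific form of the splitting inequality (the paper uses $(a+b)^{q}\le (pa)^{q}+(qb)^{q}$ with $1/p+1/q=1$ and then H\"older with the \emph{same} pair $p,q$) and that the paper simply asserts finiteness of $\int_\Omega e^{\,\gamma|u|^{N/(N-s)}}\,dx$ for the fixed limit $u$, whereas you spell out the density-and-split justification.
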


\begin{proof}
	We have
	\[
	|u_j|^{N/(N-s)} \le (|u| + |u_j - u|)^{N/(N-s)} \le (p\, |u|)^{N/(N-s)} + (q\, |u_j - u|)^{N/(N-s)},
	\]
	where $1/p + 1/q = 1$. Then
	\[
	\int_\Omega e^{\, \alpha\, |u_j|^{N/(N-s)}}\, dx \le \left(\int_\Omega e^{\, \alpha\, \widetilde{p}\, |u|^{N/(N-s)}}\, dx\right)^{1/p} \left(\int_\Omega e^{\, \alpha\, \widetilde{q}\, |u_j - u|^{N/(N-s)}}\, dx\right)^{1/q}
	\]
	by the H\"{o}lder inequality, where $\widetilde{p} = p^{(2N-s)/(N-s)}$ and $\widetilde{q} = q^{(2N-s)/(N-s)}$. The first integral on the right-hand side is finite, and the second integral equals
	\[
	\int_\Omega e^{\, \alpha\, \widetilde{q}\, \norm{u_j - u}^{N/(N-s)}\, |v_j|^{N/(N-s)}}\, dx,
	\]
	where $v_j = (u_j - u)/\norm{u_j - u}$. By Lemma \ref{Lemma 3}, $\norm{u_j - u}^{N/(N-s)} \to (1 - \norm{u}^{N/s})^{s/(N-s)}$. Taking $q>1$ sufficiently close to $1$, let 
	$$
	\alpha\, \widetilde{q}\, (1 - \norm{u}^{N/s})^{s/(N-s)} < \beta < \alpha_{N,s}(\Omega). 
	$$
	Then $\alpha\, \widetilde{q}\, \norm{u_j - u}^{N/(N-s)} \le \beta$ and hence the last integral is less than or equal to
	\[
	\int_\Omega e^{\, \beta\, |v_j|^{N/(N-s)}}\, dx,
	\]
	for all sufficiently large $j$, which is bounded since $\beta < \alpha_{N,s}(\Omega)$ and $\norm{v_j} = 1$.
\end{proof}

We close this preliminary section with a technical lemma.

\begin{lemma} \label{Lemma 1}
	For all $t \in \R$,
	\begin{enumroman}
		\item \label{Item 2} $F(t) \le \ds{\frac{N - s}{N}\, \frac{t f(t)}{|t|^{N/(N-s)}}}$,
		\item \label{Item 1} $F(t) \le F(1) + \dfrac{s\, (N - s)}{N^2}\; t f(t)$,
		\item \label{Item 3} $\ds{\frac{s}{N}\; t f(t) - F(t) \ge \frac{s^2}{N^2}\, |t|^{N^2/s(N-s)}}$, in particular, $t f(t) \ge \dfrac{N}{s}\, F(t)$,
		\item \label{Item 4} $F(t) \le \dfrac{s}{N}\, |t|^{N/s} + |t|^{N^2/s(N-s)}\, e^{\, |t|^{N/(N-s)}}$,
		\item \label{Item 5} $F(t) \ge \ds{\frac{s}{N}\, |t|^{N/s} + \frac{s\, (N - s)}{N^2}\, |t|^{N^2/s(N-s)}}$.
	\end{enumroman}
\end{lemma}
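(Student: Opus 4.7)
The plan is as follows. Since $f$ is odd and $F$ is even, it suffices to work with $t>0$. Writing $\phi(t):=e^{t^{N/(N-s)}}$, one has $f(t) = t^{(N-s)/s}\phi(t)$ and $tf(t) = t^{N/s}\phi(t)$, and a short differentiation yields the key identity
\[
tf'(t) = \tfrac{N-s}{s}\,f(t) + \tfrac{N}{N-s}\,t^{\,N^2/[s(N-s)]-1}\,\phi(t),
\]
which will be used repeatedly. Together with the two elementary bounds $\phi(t)\ge 1+t^{N/(N-s)}$ and $\phi(t)-1\le t^{N/(N-s)}\phi(t)$ (that is, $e^x\ge 1+x$ and $e^x-1\le xe^x$ for $x\ge 0$), almost everything reduces to careful bookkeeping.

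Parts (v) and (iv) follow by inserting the lower, respectively upper, bound for $\phi$ into $F(t)=\int_0^t\tau^{(N-s)/s}\phi(\tau)\,d\tau$ and integrating the resulting $\tau$-powers directly; the exponents $(N-s)/s+1$ and $(N-s)/s+N/(N-s)+1$ telescope to $N/s$ and $N^2/[s(N-s)]$, producing exactly the constants $s/N$ and $s(N-s)/N^2$. For (iv) one additionally bounds $\phi(\tau)\le\phi(t)$ for $\tau\in[0,t]$ in the second integral. For part (iii) I set
\[
G(t) := \tfrac{s}{N}\,tf(t) - F(t) - \tfrac{s^2}{N^2}\,t^{\,N^2/[s(N-s)]},
\]
note that $G(0)=0$, and differentiate: the identity for $tf'$ makes the non-trivial terms cancel, leaving
\[
G'(t) = \tfrac{s}{N-s}\,t^{\,N^2/[s(N-s)]-1}\bigl(\phi(t)-1\bigr) \ge 0,
\]
so $G\ge 0$, and dropping the non-negative last term yields the ``in particular'' statement. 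For (ii) I run the same monotonicity trick with
\[
G(t):=F(t)-F(1)-\tfrac{s(N-s)}{N^2}\,tf(t),
\]
whose derivative, after the same cancellation, simplifies to $G'(t)=\tfrac{s}{N}\,f(t)\,\bigl(1-t^{N/(N-s)}\bigr)$; hence $G$ is maximized at $t=1$, where $G(1)=-\tfrac{s(N-s)}{N^2}\,f(1)=-\tfrac{s(N-s)}{N^2}\,e<0$, giving the bound.

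Finally, for part (i) the substitution $u=\tau^{N/(N-s)}$ rewrites
\[
F(t)=\tfrac{N-s}{N}\int_0^{t^{N/(N-s)}} u^{(N-2s)/s}\,e^{u}\,du,
\]
and in the regime $N\ge 2s$ the factor $u^{(N-2s)/s}$ is non-decreasing on $[0,t^{N/(N-s)}]$; bounding it by its endpoint value $t^{N(N-2s)/[s(N-s)]}$ and integrating the remaining pure exponential produces exactly $\tfrac{N-s}{N}\,tf(t)/t^{N/(N-s)}$ via the identity $N/s - N/(N-s) = N(N-2s)/[s(N-s)]$. The main (and rather mild) obstacle I expect is keeping the exponent arithmetic straight across the five parts; checking once at the outset that $(N-s)/s+N/(N-s)+1 = N^2/[s(N-s)]$ and $N/s - N/(N-s) = N(N-2s)/[s(N-s)]$ streamlines all the subsequent cancellations in a uniform way.
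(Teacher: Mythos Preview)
Your argument is correct and close in spirit to the paper's, but the execution for parts (i)--(iii) differs. For (iv) and (v) you use exactly the same elementary bounds $e^x\le 1+xe^x$ and $e^x\ge 1+x$ as the paper. For (iii) the paper instead integrates by parts to obtain
\[
F(t)=\frac{s}{N}\,tf(t)-\frac{s}{N-s}\int_0^{|t|}\tau^{N/s+N/(N-s)-1}e^{\tau^{N/(N-s)}}\,d\tau
\]
and then bounds the integrand below by $\tau^{N^2/[s(N-s)]-1}$; your monotonicity argument via $G'\ge 0$ is simply the differentiated form of this same identity, so the two proofs are equivalent up to whether one integrates or differentiates. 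Similarly, for (ii) the paper splits into $|t|\le 1$ and $|t|>1$ and integrates by parts on the latter range, whereas your computation $G'(t)=\tfrac{s}{N}f(t)(1-t^{N/(N-s)})$ handles both ranges at once and is arguably cleaner. For (i) the paper again integrates by parts and drops a term whose sign is governed by $N-2s$; your substitution $u=\tau^{N/(N-s)}$ followed by bounding $u^{(N-2s)/s}$ at the endpoint reaches the same conclusion and makes the underlying constraint $N\ge 2s$ (which the paper uses tacitly, consistent with $N\ge 2$ and $s<1$) explicit. One small point: after bounding $u^{(N-2s)/s}$ by its endpoint value you get $e^{t^{N/(N-s)}}-1$ rather than $e^{t^{N/(N-s)}}$, so ``produces exactly'' should be ``produces at most''; the missing step $e^T-1\le e^T$ is of course trivial.
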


\begin{proof}
	Since $f$ is odd, and hence $F$ is even,
	\[
	F(t) = \int_0^{|t|} f(\tau)\, d\tau = \int_0^{|t|} \tau^{(N-s)/s}\, e^{\, \tau^{N/(N-s)}}\, d\tau.
	\]
	
	\ref{Item 2} Integrating by parts,
	\begin{align*}
		F(t) & =  \frac{N - s}{N}\, |t|^{N/s-N/(N-s)}\, e^{\, |t|^{N/(N-s)}} - \frac{N - 2s}{s} \int_0^{|t|} \tau^{N/s-N/(N-s)-1}\, e^{\, \tau^{N/(N-s)}}\, d\tau\\[3pt]
		& \le \frac{N - s}{N}\, \frac{|t|^{N/s}\, e^{\, |t|^{N/(N-s)}}}{|t|^{N/(N-s)}}\\[3pt]
		& =  \frac{N - s}{N}\, \frac{t f(t)}{|t|^{N/(N-s)}}.
	\end{align*}
	
	\ref{Item 1} For $|t| \le 1$, $F(t) \le F(1)$. For $|t| > 1$, $F(t) = F(1) + \dint_1^{|t|} f(\tau)\, d\tau$. Integrating by parts,
	\begin{align*}
		\int_1^{|t|} f(\tau)\, d\tau & =  \frac{s}{N}\, |t|^{N/s}\, e^{\, |t|^{N/(N-s)}} - \frac{se}{N} - \frac{s}{N - s} \int_1^{|t|} \tau^{(N-s)/s+N/(N-s)}\, e^{\, \tau^{N/(N-s)}}\, d\tau\\[3pt]
		& \le  \frac{s}{N}\; t f(t) - \frac{s}{N - s} \int_1^{|t|} f(\tau)\, d\tau,
	\end{align*}
	and hence $\ds{\int_1^{|t|} f(\tau)\, d\tau \le \frac{s\, (N - s)}{N^2}\; t f(t)}$.
	
	\ref{Item 3} integrating by parts,
	\begin{align*}
		F(t) & =  \frac{s}{N}\, |t|^{N/s}\, e^{\, |t|^{N/(N-s)}} - \frac{s}{N - s} \int_0^{|t|} \tau^{N/s+N/(N-s)-1}\, e^{\, \tau^{N/(N-s)}}\, d\tau\\[3pt]
		& \le  \frac{s}{N}\; t f(t) - \frac{s}{N - s} \int_0^{|t|} \tau^{N^2/s(N-s)-1}\, d\tau\\[3pt]
		& =  \frac{s}{N}\; t f(t) - \frac{s^2}{N^2}\, |t|^{N^2/s(N-s)}.
	\end{align*}
	
	\ref{Item 4} Since $e^\tau \le 1 + \tau e^\tau$ for all $\tau \ge 0$,
	\begin{align*}
		F(t) & \le  \int_0^{|t|} \tau^{(N-s)/s} \left(1 + \tau^{N/(N-s)}\, e^{\, \tau^{N/(N-s)}}\right) d\tau\\[3pt]
		& \le  \int_0^{|t|} \tau^{(N-s)/s}\, d\tau + \int_0^{|t|} |t|^{(N-s)/s+N/(N-s)}\, e^{\, |t|^{N/(N-s)}}\, d\tau\\[3pt]
		& =  \frac{s}{N}\, |t|^{N/s} + |t|^{N^2/s(N-s)}\, e^{\, |t|^{N/(N-s)}}.
	\end{align*}
	
	\ref{Item 5} Since $e^\tau \ge 1 + \tau$ for all $\tau \ge 0$,
	\begin{align*}
		F(t) & \ge  \int_0^{|t|} \tau^{(N-s)/s} \left(1 + \tau^{N/(N-s)}\right) d\tau\\[3pt]
		& =  \frac{s}{N}\, |t|^{N/s} + \frac{s\, (N - s)}{N^2}\, |t|^{N^2/s(N-s)}. 
	\end{align*}
This concludes the proof.
\end{proof}

\section{Palais-Smale condition}

Recall that $\Phi$ satisfies the \PS{c} condition if every sequence $\seq{u_j}$ in $W^{s,N/s}_0(\Omega)$ such that $\Phi(u_j) \to c$ and $\Phi'(u_j) \to 0$, called a \PS{c} sequence, has a convergent subsequence. The main result of this section is the following theorem.

\begin{theorem} \label{Theorem 2}
	$\Phi$ satisfies the {\em \PS{c}} condition for all $c < \dfrac{s}{N}\; \alpha_{N,s}(\Omega)^{(N-s)/s}$.
\end{theorem}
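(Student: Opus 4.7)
The proof breaks into four stages for a \PS{c} sequence $(u_j)$: boundedness, extraction of a weak limit, identification of the limit as a weak solution, and upgrading to strong convergence using the threshold $c < \frac{s}{N}\,\alpha_{N,s}(\Omega)^{(N-s)/s}$. For \emph{boundedness} I would use the identity
\[
\Phi(u_j) - \frac{s}{N}\,\langle \Phi'(u_j),\, u_j \rangle = \lambda \int_\Omega \left(\frac{s}{N}\, u_j f(u_j) - F(u_j)\right) dx,
\]
which equals $c + o(1) + o(\|u_j\|)$. Lemma~\ref{Lemma 1}\ref{Item 3} bounds the right-hand side from below by a positive multiple of $\int_\Omega |u_j|^{N^2/s(N-s)}\,dx$, while Lemma~\ref{Lemma 1}\ref{Item 1} (controlling $F$ by $tf(t)$) yields uniform bounds on $\int_\Omega u_j f(u_j)\,dx$ and $\int_\Omega F(u_j)\,dx$. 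Since $\|u_j\|^{N/s} = \lambda \int_\Omega u_j f(u_j)\,dx + o(\|u_j\|)$, the sequence $(\|u_j\|)$ is bounded. Passing to a subsequence, $u_j \wto u$ in $W^{s,N/s}_0(\Omega)$, $u_j \to u$ a.e., and $u_j \to u$ in every $L^q(\Omega)$ with $q<\infty$ by the fractional Rellich compactness.

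To verify that $u$ solves the equation I need $\int_\Omega f(u_j)\,\varphi\,dx \to \int_\Omega f(u)\,\varphi\,dx$ for each $\varphi \in C^\infty_0(\Omega)$; the quasi-linear principal part passes to the limit since $\widetilde{u}_j \wto \widetilde{u}$ in $L^{N/s}(\R^{2N})$ and $\widetilde{u}_j \to \widetilde{u}$ a.e. For the nonlinear term, the $L^1$-bound on $u_j f(u_j)$ together with
\[
\int_{\{|u_j|>M\}} |f(u_j)|\,dx \le \frac{1}{M}\int_\Omega |u_j f(u_j)|\,dx
\]
gives uniform integrability of $\{f(u_j)\}$ and hence, via Vitali's theorem, the desired limit. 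An entirely parallel Vitali argument (with $F$ in place of $f$, controlled by Lemma~\ref{Lemma 1}\ref{Item 1}) yields $\int_\Omega F(u_j)\,dx \to \int_\Omega F(u)\,dx$.

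The crux is the upgrade to strong convergence. Setting $v_j := u_j - u$, Lemma~\ref{Lemma 3} combined with $\int_\Omega F(u_j)\,dx \to \int_\Omega F(u)\,dx$, $\Phi(u_j) \to c$ and $\Phi'(u) = 0$ give
\[
\ell := \lim_j \|v_j\|^{N/s} = \frac{N}{s}\bigl(c - \Phi(u)\bigr).
\]
Since Lemma~\ref{Lemma 1}\ref{Item 3} forces $\Phi(u) \ge 0$, one has $\ell \le \frac{N}{s}\,c < \alpha_{N,s}(\Omega)^{(N-s)/s}$. Assuming $\ell > 0$ for contradiction, pick $q > 1$ and $\varepsilon > 0$ with $q\,(\ell + \varepsilon)^{s/(N-s)} < \alpha_{N,s}(\Omega)$: applying \eqref{fracTM} to $v_j/\|v_j\|$ bounds $\{e^{\,q\,|v_j|^{N/(N-s)}}\}$ in $L^1(\Omega)$, and the convexity estimate $(a+b)^{N/(N-s)} \le (1+\varepsilon)\,a^{N/(N-s)} + C_\varepsilon\,b^{N/(N-s)}$ applied to $|u_j| \le |v_j| + |u|$, together with H\"older, produces a uniform bound on $\int_\Omega |f(u_j)|^{q}\,dx$. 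Hence $\int_\Omega f(u_j)(u_j-u)\,dx \to 0$ by the $L^{q'}$-convergence $u_j \to u$; coupled with $\langle \Phi'(u_j), u_j \rangle \to 0$ and $\langle \Phi'(u), u \rangle = 0$ this forces $\ell = 0$, a contradiction. Uniform convexity of $W^{s,N/s}_0(\Omega)$ then promotes $\|u_j\| \to \|u\|$ plus $u_j \wto u$ to strong convergence $u_j \to u$.

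The main obstacle lies in this last step: one must convert a subcritical energy bound into a \emph{strictly} subcritical Trudinger-Moser constant for the rescaled sequence $v_j/\|v_j\|$ and simultaneously rule out mass loss in $\int_\Omega F(u_j)\,dx$ despite the exponential growth of $F$. The precise threshold $\frac{s}{N}\,\alpha_{N,s}(\Omega)^{(N-s)/s}$ is exactly the margin that allows the choice $q > 1$ in the H\"older estimate above; any weaker threshold would not leave room for this Moser-type integrability.
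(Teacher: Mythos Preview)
Your argument is correct and shares the same skeleton as the paper's proof (boundedness via Lemma~\ref{Lemma 1}, convergence of $\int F(u_j)$ by uniform integrability, identification of $u$ as a weak solution, then norm convergence plus uniform convexity), but the route through the critical compactness step is organized differently.

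The paper first splits into the cases $c=0$ and $c>0$; in the latter it separately argues by contradiction that the weak limit $u$ is nonzero, and then applies the Lions-type concentration--compactness result Theorem~\ref{Theorem 1} to the \emph{normalized} sequence $u_j/\|u_j\|$, obtaining $\sup_j \int_\Omega e^{\alpha |u_j/\|u_j\||^{N/(N-s)}}\,dx<\infty$ for every $\alpha<\alpha_{N,s}(\Omega)/(1-\|\widetilde u\|^{N/s})^{s/(N-s)}$, and from this deduces $\int u_j f(u_j)\to\int u f(u)$. Your proof instead works with $v_j=u_j-u$ directly: Br\'ezis--Lieb (Lemma~\ref{Lemma 3}) gives $\|v_j\|^{N/s}\to (N/s)(c-\Phi(u))\le (N/s)\,c<\alpha_{N,s}(\Omega)^{(N-s)/s}$, and then the \emph{basic} Trudinger--Moser bound \eqref{fracTM} applied to $v_j/\|v_j\|$ suffices, so Theorem~\ref{Theorem 1} is never invoked. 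In effect you have inlined the proof of Theorem~\ref{Theorem 1} into the Palais--Smale argument. This buys you a unified treatment of the cases $u=0$ and $u\ne 0$ and removes the preliminary step of showing $u\ne 0$; on the other hand, the paper's organization isolates the concentration--compactness principle as a stand-alone tool that can be reused elsewhere. Two small remarks: the contradiction framing in your last paragraph is unnecessary, since the chain of estimates proves $\int u_j f(u_j)\to\int u f(u)$ and hence $\ell=0$ directly once $\ell^{s/(N-s)}<\alpha_{N,s}(\Omega)$; and when you H\"older the three factors $|u_j|^{q(N-s)/s}$, $e^{q(1+\varepsilon)|v_j|^{N/(N-s)}}$, $e^{qC_\varepsilon|u|^{N/(N-s)}}$, make sure the exponent attached to the middle factor is chosen close enough to $1$ so that the effective Trudinger--Moser exponent stays below $\alpha_{N,s}(\Omega)$.
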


First we prove a lemma.

\begin{lemma} \label{Lemma 2}
	If $u_j$ converges to $u$ weakly in $W^{s,N/s}_0(\Omega)$ and a.e.\! in $\Omega$, and
	\begin{equation} \label{5}
		\sup_{j\in\N} \int_\Omega u_j\, f(u_j)\, dx < \infty,
	\end{equation}
	then
	\[
	\int_\Omega F(u_j)\, dx \to \int_\Omega F(u)\, dx.
	\]
\end{lemma}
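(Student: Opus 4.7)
The goal is to upgrade the a.e.\ convergence $u_j \to u$ to $L^1$-convergence of $F(u_j)$ to $F(u)$. The decisive ingredient is the pointwise estimate
\[
F(t) \le \frac{N-s}{N}\, \frac{t f(t)}{|t|^{N/(N-s)}}, \quad t \ne 0,
\]
provided by Lemma~\ref{Lemma 1}\ref{Item 2}, which together with the hypothesis~\eqref{5} will give uniform control on the tails of $F(u_j)$. I would execute this through a truncation argument using the bounded continuous approximants
\[
F_M(t) := F(t) \text{ if } |t| \le M, \qquad F_M(t) := F(M) \text{ if } |t| > M.
\]

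\noindent For each fixed $M > 0$, since $F_M$ is bounded and continuous and $u_j \to u$ a.e.\ in $\Omega$, the dominated convergence theorem yields
\[
\int_\Omega F_M(u_j)\, dx \to \int_\Omega F_M(u)\, dx \quad \text{as } j \to \infty.
\]
For the remainder, note that $F(u_j) - F_M(u_j) \ge 0$ vanishes on $\{|u_j| \le M\}$, so by Lemma~\ref{Lemma 1}\ref{Item 2},
\[
\int_\Omega |F(u_j) - F_M(u_j)|\, dx \le \frac{N-s}{N\, M^{N/(N-s)}} \int_\Omega u_j f(u_j)\, dx \le \frac{(N-s)\, C}{N\, M^{N/(N-s)}},
\]
where $C$ is the bound furnished by~\eqref{5}. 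Since $t f(t) = |t|^{N/s}\, e^{|t|^{N/(N-s)}} \ge 0$ and $u_j f(u_j) \to u f(u)$ a.e., Fatou's lemma implies $u f(u) \in L^1(\Omega)$, and the same pointwise estimate then forces $\int_\Omega |F(u) - F_M(u)|\, dx \to 0$ as $M \to \infty$.

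\noindent Assembling the three ingredients via the triangle inequality — first pick $M$ so large that the two tail terms are below $\eps/3$, then let $j \to \infty$ so that the middle term is below $\eps/3$ — yields $\int_\Omega F(u_j)\, dx \to \int_\Omega F(u)\, dx$, which is the desired conclusion. I do not expect serious difficulties: the whole argument is really just truncation plus dominated convergence, and the only non-routine step is the tail estimate, which is essentially immediate once one reads off from Lemma~\ref{Lemma 1}\ref{Item 2} that the exponential growth of $f$ makes the ratio $F(t)/tf(t)$ decay like $|t|^{-N/(N-s)}$.
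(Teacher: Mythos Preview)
Your proof is correct and follows essentially the same approach as the paper: split off the contribution from $\{|u_j|>M\}$, control that tail uniformly in $j$ via Lemma~\ref{Lemma 1}\ref{Item 2} together with~\eqref{5}, and handle the bounded part by dominated convergence. Your use of the continuous truncation $F_M$ and of Fatou's lemma to secure $uf(u)\in L^1(\Omega)$ is a slightly more explicit packaging of the same argument.
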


\begin{proof}
	For any $M > 0$, write
	\[
	\int_\Omega F(u_j)\, dx = \int_{\set{|u_j| < M}} F(u_j)\, dx + \int_{\set{|u_j| \ge M}} F(u_j)\, dx.
	\]
	By Lemma \ref{Lemma 1} \ref{Item 2} and \eqref{5}, we have
	\[
	\int_{\set{|u_j| \ge M}} F(u_j)\, dx \le \frac{N - s}{N M^{N/(N-s)}} \int_\Omega u_j\, f(u_j)\, dx = \O\! \left(\frac{1}{M^{N/(N-s)}}\right), \text{ as } M \to \infty.
	\]
	Hence
	\[
	\int_\Omega F(u_j)\, dx = \int_{\set{|u_j| < M}} F(u_j)\, dx + \O\! \left(\frac{1}{M^{N/(N-s)}}\right)\!,
	\]
	and the desired conclusion follows by letting $j \to \infty$ first and then $M \to \infty$.
\end{proof}

We are now ready to prove Theorem \ref{Theorem 2}.

\begin{proof}[Proof of Theorem \ref{Theorem 2}]
	Let $\seq{u_j}$ be a \PS{c} sequence. Then
	\begin{equation} \label{2}
		\Phi(u_j) = \frac{s}{N} \norm{u_j}^{N/s} - \lambda \int_\Omega F(u_j)\, dx = c + \o(1)
	\end{equation}
	and
	\begin{equation} \label{3}
		\Phi'(u_j)\, u_j = \norm{u_j}^{N/s} - \lambda \int_\Omega u_j\, f(u_j)\, dx = \o(\norm{u_j}).
	\end{equation}
	Since $s/N > s\, (N - s)/N^2$, it follows from Lemma \ref{Lemma 1} \ref{Item 1}, \eqref{2} and \eqref{3} that $\seq{u_j}$ is bounded in $W^{s,N/s}_0(\Omega)$. Hence a renamed subsequence converges to some $u$ weakly in $W^{s,N/s}_0(\Omega)$, strongly in $L^p(\Omega)$ for all $p \in [1,\infty)$, and a.e.\! in $\Omega$. Moreover,
	\[
	\sup_{j\in\N} \int_\Omega u_j\, f(u_j)\, dx < \infty
	\]
	by \eqref{3}, and hence
	\begin{equation} \label{4}
		\int_\Omega F(u_j)\, dx \to \int_\Omega F(u)\, dx
	\end{equation}
	by virtue of Lemma \ref{Lemma 2}.	
	By Lemma \ref{Lemma 1} \ref{Item 3}, \eqref{2}, and \eqref{3},
	\[
	\frac{\lambda s^2}{N^2} \int_\Omega |u_j|^{N^2/s(N-s)}\, dx \le \lambda \int_\Omega \bigg[\frac{s}{N}\; u_j\, f(u_j) - F(u_j)\bigg]\, dx = c + \o(1),
	\]
	so
	\[
	c \ge \frac{\lambda s^2}{N^2} \int_\Omega |u|^{N^2/s(N-s)}\, dx \ge 0.
	\]
	If $c = 0$, then $u = 0$ and hence $\dint_\Omega F(u_j)\, dx \to 0$ by \eqref{4}, so $\norm{u_j} \to 0$ by \eqref{2}.
	
	Now suppose that $0 < c < (s/N)\, \alpha_{N,s}(\Omega)^{(N-s)/s}$. We claim that the weak limit $u$ is nonzero. Suppose $u = 0$. Then
	\begin{equation} \label{6}
		\int_\Omega F(u_j)\, dx \to 0
	\end{equation}
	by \eqref{4} and hence
	\[
	\norm{u_j} \to \left(\frac{Nc}{s}\right)^{s/N} < \alpha_{N,s}(\Omega)^{(N-s)/N}
	\]
	by \eqref{2}. Let $(Nc/s)^{s/(N-s)} < \alpha < \alpha_{N,s}(\Omega)$. Then $\norm{u_j} \le \alpha^{(N-s)/N}$ for all $j \ge j_0$ for some $j_0$. Let $1 < q < \alpha_{N,s}(\Omega)/\alpha$. By the H\"{o}lder inequality,
	\[
	\int_\Omega u_j\, f(u_j)\, dx \le \left(\int_\Omega |u_j|^{Np/s}\, dx\right)^{1/p} \left(\int_\Omega e^{\, q\, |u_j|^{N/(N-s)}}\, dx\right)^{1/q},
	\]
	where $1/p + 1/q = 1$. The first integral on the right-hand side converges to zero since $u = 0$, while the second integral is bounded for $j \ge j_0$ since $q\, |u_j|^{N/(N-s)} = q\, \alpha\, |\widetilde{u}_j|^{N/(N-s)}$ with $q\, \alpha < \alpha_{N,s}(\Omega)$ and $\widetilde{u}_j = u_j/\alpha^{(N-s)/N}$ satisfies $\norm{\widetilde{u}_j} \le 1$, so
	\[
	\int_\Omega u_j\, f(u_j)\, dx \to 0.
	\]
	Then $u_j \to 0$ by \eqref{3}, and hence $c = 0$ by \eqref{2} and \eqref{6}, a contradiction. So $u$ is nonzero.
	
	Since $\Phi'(u_j) \to 0$,
	\[
	\int_{\R^{2N}} \frac{|u_j(x) - u_j(y)|^{(N-2s)/s}\, (u_j(x) - u_j(y))\, (v(x) - v(y))}{|x - y|^{2N}}\, dx dy - \lambda \int_\Omega f(u_j)\, v\, dx \to 0
	\]
	for all $v \in W^{s,N/s}_0(\Omega)$. For $v \in C^\infty_0(\Omega)$, an argument similar to that in the proof of Lemma \ref{Lemma 2} using the estimate
	\[
	\abs{\int_{\set{|u_j| \ge M}} f(u_j)\, v\, dx} \le \frac{\sup |v|}{M} \int_\Omega u_j\, f(u_j)\, dx = \O\! \left(\frac{1}{M}\right)
	\]
	shows that $\dint_\Omega f(u_j)\, v\, dx \to \dint_\Omega f(u)\, v\, dx$, so
	\[
	\int_{\R^{2N}} \frac{|u(x) - u(y)|^{(N-2s)/s}\, (u(x) - u(y))\, (v(x) - v(y))}{|x - y|^{2N}}\, dx dy = \lambda \int_\Omega f(u)\, v\, dx.
	\]
	Then this holds for all $v \in W^{s,N/s}_0(\Omega)$ by density, and taking $v = u$ gives
	\begin{equation} \label{8}
		\norm{u}^{N/s} = \lambda \int_\Omega u\, f(u)\, dx.
	\end{equation}
	
	Next we claim that
	\begin{equation} \label{12}
		\int_\Omega u_j\, f(u_j)\, dx \to \int_\Omega u\, f(u)\, dx.
	\end{equation}
	We have
	\begin{equation} \label{10}
		u_j\, f(u_j) = |u_j|^{N/s}\, e^{\, |u_j|^{N/(N-s)}} = |u_j|^{N/s}\, e^{\, \norm{u_j}^{N/(N-s)}\, |\widetilde{u}_j|^{N/(N-s)}},
	\end{equation}
	where $\widetilde{u}_j = u_j/\norm{u_j}$. By \eqref{2} and \eqref{4},
	\[
	\norm{u_j} \to \left[\frac{N}{s}\, (c + \lambda \beta)\right]^{s/N},
	\]
	where $\beta = \dint_\Omega F(u)\, dx$, so $\widetilde{u}_j$ converges a.e.\! to $\widetilde{u} = u/\left[(N/s)\, (c + \lambda \beta)\right]^{s/N}$. Then
	\[
	\norm{u_j}^{N/(N-s)}\, (1 - \norm{\widetilde{u}}^{N/s})^{s/(N-s)} \to \left[\frac{N}{s}\, (c + \lambda \beta) - \norm{u}^{N/s}\right]^{s/(N-s)} \le \left(\frac{Nc}{s}\right)^{s/(N-s)}
	\]
	since
	\[
	\norm{u}^{N/s} \ge \frac{\lambda N}{s} \int_\Omega F(u)\, dx = \frac{\lambda N \beta}{s}
	\]
	by \eqref{8} and Lemma \ref{Lemma 1} \ref{Item 3}. Let
	\[
	\frac{\left(\dfrac{Nc}{s}\right)^{s/(N-s)}}{(1 - \norm{\widetilde{u}}^{N/s})^{s/(N-s)}} < \alpha - 2 \eps < \alpha < \frac{\alpha_{N,s}(\Omega)}{(1 - \norm{\widetilde{u}}^{N/s})^{s/(N-s)}}.
	\]
	Then $\norm{u_j}^{N/(N-s)} \le \alpha - 2 \eps$ for all $j \ge j_0$ for some $j_0$, and
	\begin{equation} \label{11}
		\sup_{j\in\N} \int_\Omega e^{\, \alpha\, |\widetilde{u}_j|^{N/(N-s)}}\, dx < \infty
	\end{equation}
	by Theorem \ref{Theorem 1}. For $M > 0$ and $j \ge j_0$, \eqref{10} then gives
	\begin{align*}
		&  \int_{\set{|u_j| \ge M}} u_j\, f(u_j)\, dx\\[2pt]
		& \le  \int_{\set{|u_j| \ge M}} |u_j|^{N/s}\, e^{\, (\alpha - 2 \eps)\, |\widetilde{u}_j|^{N/(N-s)}}\, dx\\[2pt]
		& \le  \left(\max_{t \ge 0}\, t^{N/s}\, e^{\, - \eps\, t^{N/(N-s)}}\right) \norm{u_j}^{N/s} e^{\, - \eps\, (M/\norm{u_j})^{N/(N-s)}} \int_\Omega e^{\, \alpha\, |\widetilde{u}_j|^{N/(N-s)}}\, dx.
	\end{align*}
	The last expression goes to zero as $M \to \infty$ uniformly in $j$ since $\norm{u_j}$ is bounded and \eqref{11} holds, so \eqref{12} now follows as in the proof of Lemma \ref{Lemma 2}.	
	By \eqref{3}, \eqref{12}, and \eqref{8},
	\[
	\norm{u_j}^{N/s} \to \lambda \int_\Omega u\, f(u)\, dx = \norm{u}^{N/s}
	\]
	and hence $\norm{u_j} \to \norm{u}$, so $u_j \to u$ by the uniform convexity of $W^{s,N/s}_0(\Omega)$.
\end{proof}

\section{Eigenvalue problem}

The asymptotic problem associated with \eqref{1} as $u$ goes to zero is the eigenvalue problem
\begin{equation} \label{14}
	\left\{\begin{aligned}
		(- \Delta)_{N/s}^s\, u & = \lambda\, |u|^{(N-2s)/s}\, u && \text{in } \Omega\\
		u & = 0 && \text{in } \R^N \setminus \Omega.
	\end{aligned}\right.
\end{equation}
The weak formulation of this problem can be written as the operator equation
\begin{equation} \label{13}
	A(u) = \lambda\, B(u),
\end{equation}
where $A$ and $B$ are the nonlinear operators from $W^{s,N/s}_0(\Omega)$ to its dual $W^{-s,N/(N-s)}(\Omega)$ defined 
by setting
\begin{multline*}
	\langle A(u), v\rangle := \int_{\R^{2N}} \frac{|u(x) - u(y)|^{(N-2s)/s}\, (u(x) - u(y))\, (v(x) - v(y))}{|x - y|^{2N}}\, dx dy,\\[5pt]
	\langle B(u), v\rangle := \int_\Omega |u|^{(N-2s)/s}\, uv\, dx, \quad u, v \in W^{s,N/s}_0(\Omega),
\end{multline*}
respectively. The operators $A$ and $B$ are homogeneous of degree $(N - s)/s$, odd, and satisfy
\begin{equation*}
	\langle A(u), v\rangle \le \norm{u}^{(N-s)/s} \norm{v}, \quad \langle A(u), u\rangle = \norm{u}^{N/s},\quad
	\langle B(u), u\rangle = \pnorm[N/s]{u}^{N/s},\quad \forall u, v \in W^{s,N/s}_0(\Omega).
\end{equation*}

Since $W^{s,N/s}_0(\Omega)$ is uniformly convex, then $A$ is of type (S), i.e.\ every sequence $\seq{u_j}$ in $W^{s,N/s}_0(\Omega)$ such that $u_j \wto u$ and $\langle A(u_j), u_j - u\rangle \to 0$ as $j\to\infty$ has a subsequence that converges strongly to $u$
(see e.g.\ \cite[Proposition 1.3]{monogr}). Moreover, $B$ is a compact operator since the embedding 
$$
W^{s,N/s}_0(\Omega) \hookrightarrow L^{N/s}(\Omega),
$$ 
is compact. Hence, problem \eqref{13} falls into the abstract framework considered in \cite[Ch.\ 4]{monogr}
and we can construct an increasing and unbounded sequence of eigenvalues as follows.

Eigenvalues of problem \eqref{14} coincide with critical values of the functional
\[
\Psi(u) = \frac{1}{\pnorm[N/s]{u}^{N/s}}, \quad u \in \M = \bgset{u \in W^{s,N/s}_0(\Omega) : \norm{u} = 1}.
\]
Let $\F$ denote the class of symmetric subsets of $\M$, let $i(M)$ denote the $\Z_2$-cohomological index of $M \in \F$ (see Fadell and Rabinowitz \cite{fadell}), and set
\[
\lambda_k := \inf_{\substack{M \in \F\\[1pt] i(M) \ge k}}\, \sup_{u \in M}\, \Psi(u), \quad k \ge 1.
\]
Then
\[
\lambda_1 = \inf_{u \in \M}\, \Psi(u) > 0
\]
is the smallest eigenvalue and $\lambda_k \nearrow \infty$ is a sequence of eigenvalues (see \cite[Proposition 3.52]{monogr}). Moreover, denoting by
\[
\Psi^a := \set{u \in \M : \Psi(u) \le a}, \qquad \Psi_a := \set{u \in \M : \Psi(u) \ge a}
\]
the sub- and superlevel sets of $\Psi$, respectively, we have
\begin{equation} \label{17}
	i(\Psi^{\lambda_k}) = i(\M \setminus \Psi_{\lambda_{k+1}}) = k
\end{equation}
whenever $\lambda_k < \lambda_{k+1}$ (see \cite[Proposition 3.53]{monogr}). The main result of this section is the following.

\begin{theorem} \label{Theorem 3}
	If $\lambda_k < \lambda_{k+1}$, then the sublevel set $\Psi^{\lambda_k}$ contains a compact symmetric subset of index $k$.
\end{theorem}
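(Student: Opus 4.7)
The goal is to attain the cohomological index $k = i(\Psi^{\lambda_k})$ (which holds by \eqref{17}) on a \emph{compact} symmetric subset of $\Psi^{\lambda_k}$. My plan is to combine the Palais--Smale condition of $\Psi$ on $\M$---which follows from the compactness of $B$ together with the type~(S) property of $A$---with a deformation argument that transfers the index estimate from a minimizing symmetric set down to a compact subset of $\Psi^{\lambda_k}$, exploiting the absence of critical values of $\Psi$ in $(\lambda_k, \lambda_{k+1})$.

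The concrete steps are as follows. First, use the variational definition of $\lambda_k$ to produce, for each $n\in\N$, a compact symmetric $M_n\in\F$ with $i(M_n)\geq k$ and $\sup_{M_n}\Psi \leq \lambda_k + 1/n$. Next, since (PS) holds at level $\lambda_k$, the critical set $K_{\lambda_k}\subset \M$ is compact and symmetric; this, combined with the absence of critical values in $(\lambda_k, \lambda_{k+1})$, supplies an equivariant deformation lemma of the following form: for any symmetric open neighborhood $U$ of $K_{\lambda_k}$ and any sufficiently small $\epsilon>0$, there exists an odd homeomorphism $\eta$ of $\M$ with $\eta(\Psi^{\lambda_k+\epsilon}\setminus U) \subset \Psi^{\lambda_k}$. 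Taking $U$ small enough that $i(\closure{U}) = i(K_{\lambda_k})$ by the continuity of the Fadell--Rabinowitz cohomological index, and $n$ so large that $1/n < \epsilon$, the set $\eta(\closure{M_n \setminus U}) \cup K_{\lambda_k}$ is a compact symmetric subset of $\Psi^{\lambda_k}$.

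The main obstacle is the index arithmetic needed to conclude that the above compact set has index at least $k$. By the subadditivity bound $i(M_n) \leq i(\closure{M_n \setminus U}) + i(\closure{M_n \cap U})$, one extracts $i(\eta(\closure{M_n \setminus U})) \geq k - i(K_{\lambda_k})$ via the invariance of $i$ under odd homeomorphisms; piecing this together with $K_{\lambda_k}$ itself requires a more careful Mayer--Vietoris-type estimate than plain monotonicity. A clean shortcut that sidesteps this bookkeeping is to invoke directly the inner regularity of the Fadell--Rabinowitz cohomological index on closed symmetric subsets of a metric space, namely, $i(A) = \sup\{i(K) : K \subset A\text{ compact symmetric}\}$. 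Applied to $A = \Psi^{\lambda_k}$, which has index $k$ by \eqref{17}, this yields at once a compact symmetric $C \subset \Psi^{\lambda_k}$ with $i(C) \geq k$, and monotonicity forces $i(C) = k$.
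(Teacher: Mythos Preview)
Your proposal has a genuine gap: the ``shortcut'' you invoke at the end --- inner regularity of the Fadell--Rabinowitz cohomological index, i.e.\ $i(A)=\sup\{i(K):K\subset A\text{ compact symmetric}\}$ --- is \emph{not} one of the standard axioms of that index and is not established anywhere in the paper or its references. The listed properties (monotonicity, subadditivity, continuity from \emph{outside} via tautness of \v{C}ech cohomology, stability, piercing) do not yield approximation from inside by compacta; indeed, the unit sphere of $W^{s,N/s}_0(\Omega)$ has infinite index while every compact symmetric subset has finite index, so the identity already fails in the obvious infinite case, and no general finite-index version is available either. Since Theorem~\ref{Theorem 3} is precisely the statement that such a compact subset exists for this particular sublevel set, citing inner regularity amounts to assuming the conclusion.

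Your first approach also stalls before it starts: in this paper $\F$ is the class of \emph{all} symmetric subsets of $\M$, so you cannot take the approximating sets $M_n$ compact without already knowing a statement equivalent to the theorem. And even granting compact $M_n$, you correctly identify that the subadditivity/Mayer--Vietoris bookkeeping does not close: from $i(M_n)\le i(\closure{M_n\setminus U})+i(K_{\lambda_k})$ you only get $i(\eta(\closure{M_n\setminus U}))\ge k-i(K_{\lambda_k})$, and there is no general inequality that lets you add $i(K_{\lambda_k})$ back when you take the union with $K_{\lambda_k}$.

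The paper's proof avoids all of this by a structural trick specific to the problem. It builds an odd continuous map
\[
\varphi:\ \Psi^{\lambda_k}\ \xrightarrow{\ \widetilde\pi\ }\ \widetilde\M\ \xrightarrow{\ i\ }\ L^{N/s}(\Omega)\setminus\{0\}\ \xrightarrow{\ J\ }\ W^{s,N/s}_0(\Omega)\setminus\{0\}\ \xrightarrow{\ \pi\ }\ \M,
\]
where $J$ is the (continuous, odd) solution operator of Lemma~\ref{Lemma 5}. Two facts do all the work: (a) the embedding $i$ is compact, and $\widetilde\pi(\Psi^{\lambda_k})=\{u\in\widetilde\M:\|u\|^{N/s}\le\lambda_k\}$ is bounded, so its image in $L^{N/s}(\Omega)$ is relatively compact and hence $K_0:=\varphi(\Psi^{\lambda_k})$ is compact; (b) the key inequality \eqref{18}, namely $\|J(w)\|/|J(w)|_{N/s}\le\|w\|/|w|_{N/s}$, shows $\varphi$ decreases $\Psi$, so $K_0\subset\Psi^{\lambda_k}$. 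Monotonicity of the index under the odd continuous map $\varphi$ then gives $i(K_0)\ge i(\Psi^{\lambda_k})=k$, and the reverse inequality is immediate. No deformation lemma, no Palais--Smale for $\Psi$, and no index arithmetic are needed.
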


First a couple of lemmas.

\begin{lemma} \label{Lemma 4}
	The operator $A$ is strictly monotone, i.e.,
	\[
	\langle A(u) - A(v), u - v\rangle  > 0
	\]
	for all $u \ne v$ in $W^{s,N/s}_0(\Omega)$.
\end{lemma}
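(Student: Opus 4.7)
The plan is to reduce the strict monotonicity of $A$ to a pointwise strict monotonicity of the scalar map $\phi(t) := |t|^{(N-2s)/s}\, t = |t|^{p-2}\, t$ where $p := N/s > 1$. Setting
\[
U(x,y) := u(x) - u(y), \qquad V(x,y) := v(x) - v(y),
\]
one rewrites
\[
\langle A(u) - A(v), u - v\rangle = \int_{\R^{2N}} \frac{\bigl(\phi(U) - \phi(V)\bigr)\, (U - V)}{|x - y|^{2N}}\, dx\, dy.
\]
The integrand is measurable and the pairing is finite because $u, v \in W^{s,N/s}_0(\Omega)$ and H\"older's inequality control the cross terms exactly as in the bound $\langle A(u), v\rangle \le \norm{u}^{(N-s)/s} \norm{v}$ noted earlier.

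The next step is the pointwise inequality: for every $a, b \in \R$,
\[
\bigl(\phi(a) - \phi(b)\bigr)\,(a - b) \ge 0,
\]
with equality if and only if $a = b$. This is the standard strict monotonicity of the derivative of the strictly convex function $t \mapsto |t|^p/p$ on $\R$ (for $1 < p < 2$ one can use the refined inequality $(\phi(a) - \phi(b))(a-b) \ge c_p (|a| + |b|)^{p-2} |a-b|^2$, while for $p \ge 2$ one has the cleaner bound $\ge c_p |a-b|^p$; either way the conclusion is the same). Applying this pointwise at every $(x,y) \in \R^{2N}$ shows that the integrand is nonnegative, hence $\langle A(u) - A(v), u - v\rangle \ge 0$.

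It remains to check strictness. If the integral vanishes, the pointwise inequality forces $U(x,y) = V(x,y)$ for a.e.\ $(x,y) \in \R^{2N}$, i.e.\ $u(x) - u(y) = v(x) - v(y)$ almost everywhere. Choosing a measurable set $E \subset \R^N \setminus \Omega$ of positive measure (possible since $\Omega$ is bounded) and using the characterization of $W^{s,N/s}_0(\Omega)$ recalled in Section~2, namely that $u = v = 0$ a.e.\ on $\R^N \setminus \Omega$, we fix $y \in E$ with $u(y) = v(y) = 0$ to obtain $u(x) = v(x)$ for a.e.\ $x \in \R^N$. This contradicts $u \ne v$.

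The only genuine technical input is the strict monotonicity of $\phi$; the passage from pointwise to integral monotonicity is automatic by nonnegativity, and the boundary condition argument to promote ``$u - v$ is constant'' to ``$u = v$'' is immediate from the zero extension built into $W^{s,N/s}_0(\Omega)$. I therefore do not expect any serious obstacle in the proof; the only care required is to state the pointwise inequality in the correct range $p > 1$ (which covers all admissible $N \ge 1$, $s \in (0,1)$).
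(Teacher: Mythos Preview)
Your proof is correct and self-contained. It differs from the paper's argument: the paper invokes an abstract criterion (\cite[Lemma 6.3]{monogr}) which says that strict monotonicity follows once one checks the inequality $\langle A(u), v\rangle \le \norm{u}^{(N-s)/s}\norm{v}$ together with its equality case $\alpha u = \beta v$; this is then verified via H\"older's inequality and an analysis of when equality holds. Your approach instead writes $\langle A(u)-A(v), u-v\rangle$ directly as an integral of the pointwise quantity $(\phi(U)-\phi(V))(U-V)/|x-y|^{2N}$ and uses the elementary strict monotonicity of $t\mapsto |t|^{p-2}t$ on $\R$. The gain of your route is that it avoids the external reference and is entirely explicit; the paper's route has the advantage of fitting into a general framework for $p$-Laplacian type operators already developed in \cite{monogr}. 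Both the Fubini step (fixing a.e.\ $y\in\R^N\setminus\Omega$ with $u(y)=v(y)=0$) and the observation that $p=N/s>1$ covers both the $p\ge 2$ and $1<p<2$ regimes are handled correctly.
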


\begin{proof}
	By \cite[Lemma 6.3]{monogr}, it suffices to show that
	\[
	\langle A(u), v\rangle \le \norm{u}^{(N-s)/s} \norm{v}, \quad \forall u, v \in W^{s,N/s}_0(\Omega)
	\]
	and the equality holds if and only if $\alpha u = \beta v$ for some $\alpha, \beta \ge 0$, not both zero. We have
	\[
	\langle A(u), v\rangle \le \int_{\R^{2N}} \frac{|u(x) - u(y)|^{(N-s)/s}\, |v(x) - v(y)|}{|x - y|^{2N}}\, dx dy \le \norm{u}^{(N-s)/s} \norm{v}
	\]
	by the H\"{o}lder inequality. Clearly, equality holds throughout if $\alpha u = \beta v$ for some $\alpha, \beta \ge 0$, not both zero. Conversely, 
	if $\langle A(u), v\rangle = \norm{u}^{(N-s)/s} \norm{v}$, equality holds in both inequalities. The equality in the second inequality gives
	\[
	\alpha\, |u(x) - u(y)| = \beta\, |v(x) - v(y)| \quad \text{a.e.\! in } \R^{2N}
	\]
	for some $\alpha, \beta \ge 0$, not both zero, and then the equality in the first inequality gives
	\[
	\alpha\, (u(x) - u(y)) = \beta\, (v(x) - v(y)) \quad \text{a.e.\! in } \R^{2N}.
	\]
	Since $u$ and $v$ vanish a.e.\! in $\R^N \setminus \Omega$, it follows that $\alpha u = \beta v$ a.e.\! in $\Omega$.
\end{proof}

\begin{lemma} \label{Lemma 5}
	For each $w \in L^{N/s}(\Omega)$, the problem
	\begin{equation} \label{15}
		\left\{\begin{aligned}
			(- \Delta)_{N/s}^s\, u & = |w|^{(N-2s)/s}\, w && \text{in } \Omega\\
			u & = 0 && \text{in } \R^N \setminus \Omega
		\end{aligned}\right.
	\end{equation}
	has a unique weak solution $u \in W^{s,N/s}_0(\Omega)$. Moreover, the map
	\[
	J : L^{N/s}(\Omega) \to W^{s,N/s}_0(\Omega), \quad w \mapsto u
	\]
	is continuous, homogeneous of degree $(N - s)/s$, and satisfies
	\begin{equation} \label{18}
		\frac{\norm{J(w)}}{\pnorm[N/s]{J(w)}} \le \frac{\norm{w}}{\pnorm[N/s]{w}}
	\end{equation}
	for all $w \ne 0$ in $L^{N/s}(\Omega)$.
\end{lemma}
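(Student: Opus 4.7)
The plan is to obtain existence by the direct method applied to a convex energy functional, uniqueness from strict monotonicity, and then derive continuity, homogeneity, and \eqref{18} by direct testing in the weak formulation together with the results already established. Concretely, I would minimize
\[
E(v) := \frac{s}{N}\,\norm{v}^{N/s} - \int_\Omega |w|^{(N-2s)/s}\, w\, v\, dx, \quad v \in W^{s,N/s}_0(\Omega).
\]
Since $|w|^{(N-2s)/s}w \in L^{N/(N-s)}(\Omega)$ and $W^{s,N/s}_0(\Omega) \hookrightarrow L^{N/s}(\Omega)$ continuously, H\"older's inequality controls the linear term by $C\,\pnorm[N/s]{w}^{(N-s)/s}\norm{v}$. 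Combined with the convex leading term of order $N/s > 1$, the functional $E$ is coercive, weakly lower semicontinuous, and strictly convex on the reflexive space $W^{s,N/s}_0(\Omega)$, hence attains a unique minimizer $u$, which is the desired weak solution of \eqref{15}. Uniqueness also follows directly from Lemma \ref{Lemma 4}.

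For continuity of $J$, let $w_n \to w$ in $L^{N/s}(\Omega)$ and set $u_n := J(w_n)$. Testing the equation for $u_n$ against $u_n$ itself, H\"older's inequality and the Sobolev embedding yield $\norm{u_n}^{N/s} \le C\,\pnorm[N/s]{w_n}^{(N-s)/s}\norm{u_n}$, so $\seq{u_n}$ is bounded and admits a weakly convergent subsequence $u_n \wto u^\ast$. Since the Nemytskii operator $w \mapsto |w|^{(N-2s)/s}w$ is continuous from $L^{N/s}(\Omega)$ into $L^{N/(N-s)}(\Omega)$, one passes to the limit in the weak formulation to conclude $u^\ast = J(w)$ by uniqueness. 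Strong convergence $u_n \to u^\ast$ is then upgraded from weak convergence by observing $\langle A(u_n) - A(u^\ast), u_n - u^\ast\rangle \to 0$ and invoking the $(S)$-property of $A$ recalled in the text. Homogeneity reduces to a scaling argument: since $A$ is $(N-s)/s$-homogeneous and $|tw|^{(N-2s)/s}(tw) = t^{(N-s)/s}|w|^{(N-2s)/s}w$ for $t > 0$, uniqueness of the solution of \eqref{15} fixes the scaling of $J(tw)$ in terms of $J(w)$.

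Finally, for the key inequality \eqref{18}, I would test the weak formulation for $u = J(w)$ against two choices. Taking $v = u$ and applying H\"older's inequality gives
\[
\norm{u}^{N/s} = \int_\Omega |w|^{(N-2s)/s}\,w\,u\,dx \le \pnorm[N/s]{w}^{(N-s)/s}\,\pnorm[N/s]{u},
\]
while taking $v = w$ (when $w \in W^{s,N/s}_0(\Omega)$, so that \eqref{18} is not vacuous) yields
\[
\pnorm[N/s]{w}^{N/s} = \langle A(u), w\rangle \le \norm{u}^{(N-s)/s}\,\norm{w}
\]
by the bound on $\langle A(u), v\rangle$ recalled at the beginning of this section. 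Multiplying the two inequalities and cancelling the common factor $\norm{u}^{(N-s)/s}\,\pnorm[N/s]{w}^{(N-s)/s}$ produces $\norm{u}\,\pnorm[N/s]{w} \le \norm{w}\,\pnorm[N/s]{u}$, which is precisely \eqref{18}. I expect the only nontrivial step to be upgrading the weak convergence to strong in the continuity proof, which relies on the $(S)$-property of $A$; all remaining assertions rest on standard variational and H\"older-type estimates.
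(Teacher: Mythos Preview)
Your proposal is correct and follows essentially the same route as the paper's proof: existence via minimization of the convex coercive energy, uniqueness from Lemma~\ref{Lemma 4}, continuity by testing with $u_n$ to get boundedness and then invoking the (S)-property of $A$ after identifying the weak limit, and \eqref{18} by testing the weak formulation with $v=u$ and $v=w$ and combining the two resulting H\"older-type estimates. Your parenthetical remark that \eqref{18} is only nontrivial when $w\in W^{s,N/s}_0(\Omega)$ is a detail the paper leaves implicit.
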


\begin{proof}
	The existence follows from a standard minimization argument and the uniqueness from Lemma \ref{Lemma 4}.	
	Clearly, $J$ is homogeneous of degree $(N - s)/s$. To see that it is continuous, let $w_j \to w$ in $L^{N/s}(\Omega)$ and let $u_j = J(w_j)$, so
	\begin{equation} \label{16}
		\langle A(u_j), v\rangle = \int_\Omega |w_j|^{(N-2s)/s}\, w_j v\, dx \quad \forall v \in W^{s,N/s}_0(\Omega).
	\end{equation}
	Testing with $v = u_j$ gives
	\[
	\norm{u_j}^{N/s} = \int_\Omega |w_j|^{(N-2s)/s}\, w_j u_j\, dx \le \pnorm[N/s]{w_j}^{(N-s)/s} \pnorm[N/s]{u_j}
	\]
	by the H\"{o}lder inequality, which together with the imbedding $W^{s,N/s}_0(\Omega) \hookrightarrow L^{N/s}(\Omega)$ shows that $\seq{u_j}$ is bounded. 
	Therefore, a renamed subsequence of $\seq{u_j}$ converges to some $u$ weakly, strongly in $L^{N/s}(\Omega)$ and a.e.\! in $\Omega$. Then $u$ is a weak solution of problem \eqref{15} as in the proof of Theorem \ref{Theorem 2}, so $u = J(w)$. Testing \eqref{16} with $u_j - u$ gives
	\[
	\langle A(u_j), u_j - u\rangle = \int_\Omega |w_j|^{(N-2s)/s}\, w_j\, (u_j - u)\, dx \to 0,
	\]
	so $u_j \to u$ for a further subsequence since the operator $A$ is of type (S).	
	Finally, testing
	\[
	\langle A(u), v\rangle = \int_\Omega |w|^{(N-2s)/s}\, wv\, dx
	\]
	with $v = u, w$ and using the H\"{o}lder inequality gives
	\[
	\norm{u}^{N/s} \le \pnorm[N/s]{w}^{(N-s)/s} \pnorm[N/s]{u}, \qquad \pnorm[N/s]{w}^{N/s} \le \norm{u}^{(N-s)/s} \norm{w},
	\]
	from which \eqref{18} follows.
\end{proof}

We are now ready to prove Theorem \ref{Theorem 3}.

\begin{proof}[Proof of Theorem \ref{Theorem 3}]
	Let
	\[
	\pi(u) = \frac{u}{\norm{u}}, \quad \widetilde{\pi}(u) = \frac{u}{\pnorm[N/s]{u}}, \quad u \in W^{s,N/s}_0(\Omega) \setminus \set{0}
	\]
	be the radial projections onto $\M$ and 
	$$
	\widetilde{\M} = \bgset{u \in W^{s,N/s}_0(\Omega) : \pnorm[N/s]{u} = 1},
	$$
	respectively, let $i$ be the imbedding $W^{s,N/s}_0(\Omega) \hookrightarrow L^{N/s}(\Omega)$, let $J$ be the map defined in Lemma \ref{Lemma 5}, and let $\varphi : \Psi^{\lambda_k} \to \M$ be the composition of the maps
	\[
	\begin{CD}
	\Psi^{\lambda_k} @>\widetilde{\pi}>> \widetilde{\M} @>i>> L^{N/s}(\Omega) \setminus \set{0} @>J>> W^{s,N/s}_0(\Omega) \setminus \set{0} @>\pi>> \M.
	\end{CD}
	\]
	Since $i$ is compact,
	\[
	i(\widetilde{\pi}(\Psi^{\lambda_k})) = \bgset{u \in \widetilde{\M} : \norm{u}^{N/s} \le \lambda_k}
	\]
	is compact in $L^{N/s}(\Omega)$, and hence $K_0 = \varphi(\Psi^{\lambda_k})$ is compact in $W^{s,N/s}_0(\Omega)$. Since $\varphi$ is an odd continuous map, $i(K_0) \ge i(\Psi^{\lambda_k})$. For $u \in \Psi^{\lambda_k}$, $\varphi(u) = J(u)/\norm{J(u)}$ since $J$ is homogeneous, so
	\[
	\Psi(\varphi(u)) = \frac{\norm{J(u)}^{N/s}}{\pnorm[N/s]{J(u)}^{N/s}} \le \frac{\norm{u}^{N/s}}{\pnorm[N/s]{u}^{N/s}} = \Psi(u) \le \lambda_k
	\]
	by \eqref{18}, and hence $K_0 \subset \Psi^{\lambda_k}$. Then $i(K_0) \le i(\Psi^{\lambda_k})$ by the monotonicity of the index, so $i(K_0) = i(\Psi^{\lambda_k}) = k$ by \eqref{17}.
\end{proof}

\section{Bifurcation and multiplicity}
\label{bif-sect}

In this section we prove the following bifurcation and multiplicity results for problem \eqref{1}, in which the constant
\[
\mu_{N,s}(\Omega) = \alpha_{N,s}(\Omega)^{(N-s)/N}\! \left(\frac{N}{s\, \vol{\Omega}}\right)^{s/N}
\]
plays an important role, where $\vol{}$ denotes the Lebesgue measure in $\R^N$.

\begin{theorem} \label{Theorem 6}
	If
	\[
	\lambda < \lambda_1 < \lambda + \mu_{N,s}(\Omega)\, \lambda^{(N-s)/N},
	\]
	then problem \eqref{1} has a pair of nontrivial solutions $\pm\, u^\lambda$ such that $u^\lambda \to 0$ as $\lambda \nearrow \lambda_1$.
\end{theorem}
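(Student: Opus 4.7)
The plan is to apply a symmetric mountain pass argument to the even $C^1$ functional $\Phi$, exploiting the Palais-Smale condition below the critical threshold $\frac{s}{N}\, \alpha_{N,s}(\Omega)^{(N-s)/s}$ established in Theorem \ref{Theorem 2}. First I would verify the mountain pass geometry: that $u = 0$ is a strict local minimum of $\Phi$ when $\lambda < \lambda_1$, by combining Lemma \ref{Lemma 1} \ref{Item 4} with the Rayleigh-type bound $\norm{u}^{N/s} \ge \lambda_1\, \pnorm[N/s]{u}^{N/s}$, H\"older's inequality, and the fractional Trudinger-Moser inequality \eqref{fracTM} to control the exponential term on a small sphere $\norm{u} = r$; and that $\Phi(t\varphi_1) \to -\infty$ as $t \to +\infty$, where $\varphi_1$ is a first eigenfunction of \eqref{14} normalized so that $\pnorm[N/s]{\varphi_1} = 1$ (hence $\norm{\varphi_1}^{N/s} = \lambda_1$), thanks to the exponential growth of $F$ granted by Lemma \ref{Lemma 1} \ref{Item 5}.

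The crucial step is the upper estimate of the mountain pass level
\[
c_\lambda := \inf_{\gamma \in \Gamma}\, \max_{t \in [0,1]} \Phi(\gamma(t)),
\]
with $\Gamma$ the usual class of paths in $W^{s,N/s}_0(\Omega)$ joining $0$ to a point where $\Phi$ is negative. Using the radial path $t \mapsto tR\varphi_1$ with $R$ large, and the lower bound in Lemma \ref{Lemma 1} \ref{Item 5}, one obtains
\[
\Phi(t\varphi_1) \le \frac{s}{N}(\lambda_1 - \lambda)\, t^{N/s} - \frac{\lambda\, s\, (N-s)}{N^2}\, t^{N^2/s(N-s)} \int_\Omega |\varphi_1|^{N^2/s(N-s)}\, dx.
\]
Optimizing explicitly in $t \ge 0$ and using H\"older to bound $\int_\Omega |\varphi_1|^{N^2/s(N-s)}\, dx \ge \vol{\Omega}^{-s/(N-s)}$ yields
\[
c_\lambda \le \frac{s^2}{N^2}\; \frac{\vol{\Omega}\,(\lambda_1 - \lambda)^{N/s}}{\lambda^{(N-s)/s}}.
\]
A routine rearrangement shows this bound is strictly less than $\frac{s}{N}\, \alpha_{N,s}(\Omega)^{(N-s)/s}$ \emph{if and only if} $\lambda_1 - \lambda < \mu_{N,s}(\Omega)\, \lambda^{(N-s)/N}$, i.e.\ exactly the standing hypothesis.

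Theorem \ref{Theorem 2} then supplies the Palais-Smale condition at level $c_\lambda$, and the symmetric mountain pass theorem applied to the even functional $\Phi$ produces a pair $\pm u^\lambda$ of critical points at level $c_\lambda > 0$. For the bifurcation assertion, the displayed estimate forces $c_\lambda \to 0$ as $\lambda \nearrow \lambda_1$; combining the identities $\Phi(u^\lambda) = c_\lambda$ and $\langle \Phi'(u^\lambda), u^\lambda\rangle = 0$ with Lemma \ref{Lemma 1} \ref{Item 3} gives $\int_\Omega |u^\lambda|^{N^2/s(N-s)}\, dx \to 0$, and the Palais-Smale analysis at level zero carried out in the proof of Theorem \ref{Theorem 2} (which forces any such sequence to satisfy $\norm{u_j} \to 0$) then yields $\norm{u^\lambda} \to 0$. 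The hard part is the level estimate itself: the specific choice of the first eigenfunction normalized in $L^{N/s}$, the sharp polynomial lower bound in Lemma \ref{Lemma 1} \ref{Item 5}, and the H\"older control on $\int_\Omega |\varphi_1|^{N^2/s(N-s)}\, dx$ are what conspire to produce the constant $\mu_{N,s}(\Omega)$ with the correct exponents.
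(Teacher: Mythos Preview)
Your proposal is correct and matches the paper's approach: the paper does not write out the proof of Theorem \ref{Theorem 6} but states it is ``similar and simpler'' than that of Theorem \ref{Theorem 4}, and indeed the abstract linking Theorem \ref{Theorem 5} specialized to $k=0$, $m=1$ reduces precisely to the mountain pass geometry you describe, with $K_0=\{\pm\varphi_1/\|\varphi_1\|\}\subset\Psi^{\lambda_1}$ and $B_0=\M$. Your level estimate via Lemma \ref{Lemma 1} \ref{Item 5} and H\"older, your local-minimum argument via Lemma \ref{Lemma 1} \ref{Item 4}, and your bifurcation argument via Lemma \ref{Lemma 1} \ref{Item 3} all coincide with the corresponding steps in the proof of Theorem \ref{Theorem 4}; the only cosmetic difference is that you invoke the classical mountain pass theorem directly rather than the pseudo-index framework, which is exactly the simplification the paper alludes to.
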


\begin{theorem} \label{Theorem 4}
	If $\lambda_k \le \lambda < \lambda_{k+1} = \cdots = \lambda_{k+m} < \lambda_{k+m+1}$ for some $k, m \ge 1$ and
	\begin{equation} \label{19}
		\lambda + \mu_{N,s}(\Omega)\, \lambda^{(N-s)/N} > \lambda_{k+1},
	\end{equation}
	then problem \eqref{1} has $m$ distinct pairs of nontrivial solutions $\pm\, u^\lambda_j,\, j = 1,\dots,m$ such that $u^\lambda_j \to 0$ as $\lambda \nearrow \lambda_{k+1}$.
\end{theorem}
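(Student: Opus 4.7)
The plan is to invoke the abstract cohomological critical point theorem of \cite{yype}: for an even $C^1$ functional that satisfies $(PS)_c$ in a suitable energy window and admits a pair of linking sets whose cohomological indices differ by $m$, one harvests at least $m$ distinct pairs of critical points with critical values in that window. The compactness input $(PS)_c$ for $c<(s/N)\,\alpha_{N,s}(\Omega)^{(N-s)/s}$ is already furnished by Theorem \ref{Theorem 2}, so the work concentrates on building the linking and placing the minimax level strictly inside the allowed band.

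First I would set up the linking. By Theorem \ref{Theorem 3}, applicable because $\lambda_k<\lambda_{k+1}$, there is a compact symmetric $A_0\subset\Psi^{\lambda_k}$ with $i(A_0)=k$. Let $B_0:=\Psi_{\lambda_{k+1}}$. The two strict inequalities in the hypothesis, namely $\lambda_k<\lambda_{k+1}$ and $\lambda_{k+m}<\lambda_{k+m+1}$, together with the identity $\Psi^{\lambda_{k+m}}=\Psi^{\lambda_{k+1}}$ enforced by $\lambda_{k+1}=\cdots=\lambda_{k+m}$, allow two applications of \eqref{17} to read off $i(\M\setminus B_0)=k$ and $i(\Psi^{\lambda_{k+1}})=k+m$. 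The cohomological gap between $A_0$ and $B_0$ is therefore exactly $m$, as required by the abstract theorem.

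The analytic heart is the pair of energy estimates. On the cone $C:=\{tu:u\in A_0,\ t\ge 0\}$, Lemma \ref{Lemma 1}(v) combined with $\pnorm[N/s]{u}^{N/s}\ge 1/\lambda_k$ on $\Psi^{\lambda_k}$ and $\lambda\ge\lambda_k$ yields $\Phi(tu)\le 0$ for every $t\ge 0$, whence $\sup_C\Phi\le 0$. On the opposite side, for $u\in B_0$ and small $r>0$, Lemma \ref{Lemma 1}(iv) together with H\"older's inequality and the fractional Trudinger--Moser bound \eqref{fracTM} produces
\[
\Phi(ru)\ge \frac{s}{N}\,r^{N/s}\!\left(1-\frac{\lambda}{\lambda_{k+1}}\right) - C\,\lambda\, r^{N^2/s(N-s)},
\]
whose maximum in $r>0$ is strictly positive. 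Tracking the constant $C$ in terms of $\alpha_{N,s}(\Omega)$ and $\vol{\Omega}$ --- this is where the precise shape of $\mu_{N,s}(\Omega)$ is forced by the computation --- then shows that this positive minimax value sits strictly below $(s/N)\alpha_{N,s}(\Omega)^{(N-s)/s}$ \emph{exactly} under the hypothesis \eqref{19}.

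The abstract theorem of \cite{yype} now delivers $m$ pairs $\pm u_j^\lambda$ at critical levels $c_{j,\lambda}\in(0,\,(s/N)\alpha_{N,s}(\Omega)^{(N-s)/s})$. As $\lambda\nearrow\lambda_{k+1}$, both the optimising radius $r_\lambda$ and the upper bound on the critical values collapse to $0$, so $c_{j,\lambda}\to 0$; combined with $\Phi'(u_j^\lambda)=0$ and the uniform convexity of $W^{s,N/s}_0(\Omega)$ established in Section 2, this forces $u_j^\lambda\to 0$ strongly. I expect the third paragraph to be the main obstacle: extracting the precise constant $\mu_{N,s}(\Omega)$ from the H\"older--Trudinger--Moser estimate requires working with the best currently-known (and not proven sharp) exponent $\alpha_{N,s}(\Omega)$, which also explains why the admissible range in \eqref{19} is strictly smaller than in the local $N$-Laplacian analogue of \cite{yype}.
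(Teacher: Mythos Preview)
Your linking sets are misassigned, and this breaks the argument. The abstract theorem (Theorem~\ref{Theorem 5}) requires a compact symmetric set $K_0$ with $i(K_0)\ge k+m$ on which $\Phi$ is nonpositive at large radius, together with $B_0$ with $i(\M\setminus B_0)\le k$ on which $\Phi$ is positive at small radius. You take $A_0\subset\Psi^{\lambda_k}$ with $i(A_0)=k$, which has the wrong index to play the role of $K_0$; the observation $i(\Psi^{\lambda_{k+1}})=k+m$ is correct but you never extract a \emph{compact} subset of that sublevel with index $k+m$. The paper does this by applying Theorem~\ref{Theorem 3} at the gap $\lambda_{k+m}<\lambda_{k+m+1}$, producing a compact $K_0\subset\Psi^{\lambda_{k+m}}=\Psi^{\lambda_{k+1}}$ with $i(K_0)=k+m$. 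On this correct $K_0$ one only has $\pnorm[N/s]{u}^{N/s}\ge 1/\lambda_{k+1}$ (not $1/\lambda_k$), and since $\lambda<\lambda_{k+1}$ your ``$\Phi(tu)\le 0$ for all $t$'' argument fails; one really needs the superquadratic term in Lemma~\ref{Lemma 1}\ref{Item 5} to drive $\Phi(Ru)\le 0$ for $R$ large.

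The second gap is where the threshold constant $\mu_{N,s}(\Omega)$ enters. You try to read it off the lower bound for $\Phi(ru)$ on $B_0$, but the max over $r$ of a lower bound for $\inf_{u\in B_0}\Phi(ru)$ gives only a lower bound for the minimax levels, never an upper bound; it cannot certify $c_j^\ast<b$. In the abstract theorem the upper bound is $c_{k+m}^\ast\le\sup\Phi(X)$, with $X$ the filled cone over $K_0$. The paper bounds this by combining Lemma~\ref{Lemma 1}\ref{Item 5} with H\"older on $\pnorm[N^2/s(N-s)]{u}$, giving for $u\in X$
\[
\Phi(u)\le \frac{(\lambda_{k+1}-\lambda)\,s}{N}\,\rho-\frac{\lambda s(N-s)}{N^2\vol{\Omega}^{s/(N-s)}}\,\rho^{N/(N-s)},\qquad \rho=\int_\Omega|u|^{N/s}\,dx,
\]
and optimising over $\rho\ge 0$ yields $\sup\Phi(X)\le \dfrac{(\lambda_{k+1}-\lambda)^{N/s}s^2\vol{\Omega}}{\lambda^{(N-s)/s}N^2}$; the inequality $\sup\Phi(X)<\dfrac{s}{N}\alpha_{N,s}(\Omega)^{(N-s)/s}$ is then equivalent to \eqref{19}, and this is where $\mu_{N,s}(\Omega)$ is forced. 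The Trudinger--Moser exponent $\alpha_{N,s}(\Omega)$ enters only through $b$, not through any H\"older estimate on the $B$-side. Your final paragraph on $u_j^\lambda\to 0$ then goes through once the upper bound \eqref{21} is in hand.
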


In particular, we have the following existence result.

\begin{corollary}
	If
	\[
	\lambda_k \le \lambda < \lambda_{k+1} < \lambda + \mu_{N,s}(\Omega)\, \lambda^{(N-s)/N}
	\]
	for some $k \ge 1$, then problem \eqref{1} has a nontrivial solution.
\end{corollary}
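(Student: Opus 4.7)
The plan is to deduce this corollary immediately from Theorem~\ref{Theorem 4} by selecting the multiplicity parameter $m$ in a canonical way. Since the eigenvalue sequence $(\lambda_j)_{j \ge 1}$ constructed in Section~4 is nondecreasing and satisfies $\lambda_j \nearrow \infty$, the integer
\[
m := \#\set{j \ge 1 : \lambda_j = \lambda_{k+1}}
\]
is well-defined and at least~$1$. With this choice one automatically has
\[
\lambda_k \le \lambda < \lambda_{k+1} = \lambda_{k+2} = \cdots = \lambda_{k+m} < \lambda_{k+m+1},
\]
so the first hypothesis of Theorem~\ref{Theorem 4} is satisfied.

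The second hypothesis of Theorem~\ref{Theorem 4}, namely \eqref{19}, reads
\[
\lambda + \mu_{N,s}(\Omega)\,\lambda^{(N-s)/N} > \lambda_{k+1},
\]
and this is precisely the standing assumption of the corollary, so it holds without further work.

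With both hypotheses in force, I would then invoke Theorem~\ref{Theorem 4} to obtain $m$ distinct pairs $\pm u^\lambda_1, \dots, \pm u^\lambda_m$ of nontrivial solutions of \eqref{1}; in particular, the pair $\pm u^\lambda_1$ already exhibits a nontrivial solution, which is the conclusion of the corollary.

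There is essentially no obstacle here, since the corollary is just a direct specialization of Theorem~\ref{Theorem 4}. The only point worth making explicit is that the apparent freedom in choosing $m \ge 1$ inside Theorem~\ref{Theorem 4} can always be realized under the weaker hypothesis of the corollary, by picking $m$ equal to the (necessarily finite) multiplicity of $\lambda_{k+1}$ in the cohomological eigenvalue sequence; finiteness is guaranteed precisely by $\lambda_j\nearrow\infty$.
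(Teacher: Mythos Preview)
Your proposal is correct and matches the paper's intent: the corollary is stated there as an immediate specialization of Theorem~\ref{Theorem 4} (``In particular, we have the following existence result''), with no separate proof given. Your only addition is the explicit choice of $m$ as the multiplicity of $\lambda_{k+1}$, which is the natural way to realize the hypotheses of Theorem~\ref{Theorem 4}; note that the assumption $\lambda_k \le \lambda < \lambda_{k+1}$ forces $\lambda_k < \lambda_{k+1}$, so the indices $j$ with $\lambda_j = \lambda_{k+1}$ are exactly $k+1,\dots,k+m$, as you implicitly use.
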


\begin{remark}
	Since $\lambda \ge \lambda_k$ in Theorem \ref{Theorem 4}, \eqref{19} holds if
	\[
	\lambda > \lambda_{k+1} - \mu_{N,s}(\Omega)\, \lambda_k^{(N-s)/N},
	\]
	or if
	\[
	\lambda > \left(\frac{\lambda_{k+1} - \lambda_k}{\mu_{N,s}(\Omega)}\right)^{N/(N-s)}.
	\]
\end{remark}

We only give the proof of Theorem \ref{Theorem 4}. The proof of Theorem \ref{Theorem 6} 
is similar and simpler. The proof will be based on an abstract critical point theorem proved in 
Yang and Perera \cite{yype} that generalizes Bartolo et al.\ \cite[Theorem 2.4]{bartolo}. 

Let $\Phi$ be an even $C^1$-functional on a Banach space $W$. Let $\A^\ast$ denote the class of symmetric subsets of $W$, let $r > 0$, let $S_r = \set{u \in W : \norm{u} = r}$, let $0 < b \le \infty$, and let $\Gamma$ denote the group of odd homeomorphisms of $W$ that are the identity outside $\Phi^{-1}(0,b)$. The pseudo-index of $M \in \A^\ast$ related to $i$, $S_r$, and $\Gamma$ is defined by
\[
i^\ast(M) = \min_{\gamma \in \Gamma}\, i(\gamma(M) \cap S_r)
\]
(see Benci \cite{benci}).

\begin{theorem}[{\cite[Theorem 2.4]{yype}}] \label{Theorem 5}
	Let $K_0$ and $B_0$ be symmetric subsets of $\M = \set{u \in W : \norm{u} = 1}$ such that $K_0$ is compact, $B_0$ is closed, and
	\[
	i(K_0) \ge k + m, \qquad i(\M \setminus B_0) \le k
	\]
	for some $k \ge 0$ and $m \ge 1$. Assume that there exists $R > r$ such that
	\[
	\sup \Phi(K) \le 0 < \inf \Phi(B), \qquad \sup \Phi(X) < b,
	\]
	where $K = \set{Ru : u \in K_0}$, $B = \set{ru : u \in B_0}$, and $X = \set{tu : u \in K,\, 0 \le t \le 1}$. For $j = k + 1,\dots,k + m$, let
	\[
	\A_j^\ast = \set{M \in \A^\ast : M \text{ is compact and } i^\ast(M) \ge j}
	\]
	and set
	\[
	c_j^\ast := \inf_{M \in \A_j^\ast}\, \max_{u \in M}\, \Phi(u).
	\]
	Then
	\[
	\inf \Phi(B) \le c_{k+1}^\ast \le \dotsb \le c_{k+m}^\ast \le \sup \Phi(X),
	\]
	in particular, $0 < c_j^\ast < b$. If, in addition, $\Phi$ satisfies the {\em \PS{c}} condition for all $c \in (0,b)$, then each $c_j^\ast$ is a critical value of $\Phi$ and there are $m$ distinct pairs of associated critical points.
\end{theorem}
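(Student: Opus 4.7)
The plan is to establish four facts in sequence: monotonicity of the minimax values; the lower bound $\inf_B \Phi \le c^\ast_{k+1}$ by an intersection argument; the upper bound $c^\ast_{k+m} \le \sup_X \Phi$ by exhibiting $X$ itself as a competitor in $\A^\ast_{k+m}$; and the extraction of $m$ critical pairs under PS. Throughout one uses monotonicity, continuity, and subadditivity of the $\Z_2$-cohomological index, together with the defining property of $\Gamma$: every $\gamma \in \Gamma$ is an odd homeomorphism that coincides with the identity off $\Phi^{-1}(0,b)$, so in particular $\gamma$ fixes both the origin and every point of $K$.

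\emph{Monotonicity and lower bound.} The nesting $\A^\ast_{k+m} \subset \cdots \subset \A^\ast_{k+1}$ is immediate, giving $c^\ast_{k+1} \le \cdots \le c^\ast_{k+m}$ by taking infima. For the lower bound, fix $M \in \A^\ast_{k+1}$ and evaluate $i^\ast$ at $\gamma = \mathrm{id} \in \Gamma$ to obtain $i(M \cap S_r) \ge k+1$. The odd homeomorphism $u \mapsto u/r$ carries $M \cap S_r$ onto a symmetric subset of $\M$ of index at least $k+1$; were it contained in $\M \setminus B_0$, monotonicity of the index would force $k+1 \le i(\M \setminus B_0) \le k$, a contradiction. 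Hence $M \cap B \neq \emptyset$ and $\max_M \Phi \ge \inf_B \Phi > 0$.

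\emph{Upper bound.} It suffices to show $X \in \A^\ast_{k+m}$. Compactness of $X$ follows from compactness of $K_0$ and continuity of scalar multiplication, and symmetry is automatic. To verify $i^\ast(X) \ge k+m$, fix $\gamma \in \Gamma$; since $\Phi \le 0$ on $K$ and $\Phi(0) = 0$, the map $\gamma$ fixes both $0$ and every point of $K$ pointwise. For each $u \in K_0$ the curve $\alpha_u(t) := \gamma(tu)$, $t \in [0, R]$, satisfies $\norm{\alpha_u(0)} = 0$ and $\norm{\alpha_u(R)} = R > r$, hence meets $S_r$ by continuity. Thus $\gamma(X) \cap S_r$ separates $0$ from $K$ inside the compact symmetric set $\gamma(X)$, and the classical separation/linking principle for the cohomological index (Fadell-Rabinowitz \cite{fadell}) gives $i(\gamma(X) \cap S_r) \ge i(K) = i(K_0) \ge k+m$. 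Taking infimum over $\gamma \in \Gamma$ yields $i^\ast(X) \ge k+m$, so $c^\ast_{k+m} \le \sup_X \Phi < b$.

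\emph{Critical values under PS.} If some $c^\ast_j$ were regular, the $\Z_2$-equivariant deformation lemma on $(0,b)$ would furnish $\eta \in \Gamma$ pushing $\set{u : \Phi(u) \le c^\ast_j + \delta}$ into $\set{u : \Phi(u) \le c^\ast_j - \delta}$. Applied to a near-optimal $M \in \A^\ast_j$, and using that $i^\ast$ is invariant under $\Gamma$ by its very definition, this yields $\eta(M) \in \A^\ast_j$ with $\max_{\eta(M)} \Phi < c^\ast_j$, contradicting the infimum. When consecutive values coincide, $c^\ast_j = \cdots = c^\ast_{j+l-1} = c$, the same deformation argument combined with the continuity property of the index on neighborhoods of compact PS critical sets shows $i(K_c) \ge l$, giving $l$ distinct pairs of critical points at level $c$; summing multiplicities across distinct levels produces $m$ pairs in total.

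\emph{Main obstacle.} The genuine content of the theorem is the inequality $i^\ast(X) \ge k+m$. The elementary fact that each $\alpha_u$ crosses $S_r$ is not by itself enough; one needs a topological linking/separation principle for the $\Z_2$-cohomological index to conclude that the crossing set inherits the index of $K_0$. This is where both the cone structure of $X$ and the fact that $\gamma$ fixes $K$ pointwise become essential—the rest of the proof is a formal consequence of the index axioms together with the standard equivariant deformation lemma.
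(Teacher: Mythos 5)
The paper does not prove this theorem; it is quoted verbatim from Yang--Perera \cite{yype}, and the abstract result in turn generalizes Bartolo--Benci--Fortunato \cite{bartolo}. So there is no in-paper proof to compare against, but your sketch is essentially the standard pseudo-index argument and is structurally sound. Your lower bound (intersection with $B$), the monotonicity of the $c_j^\ast$, and the deformation/multiplicity argument under \PS{c} are all correct and phrased as in the literature, including the observation that $i^\ast$ is $\Gamma$-invariant because $\Gamma$ is a group.

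The one place where your write-up is genuinely vague is exactly the step you yourself flag as the ``main obstacle,'' and it deserves a sharper statement. You justify $i(\gamma(X)\cap S_r)\ge i(K_0)$ by saying that $\gamma(X)\cap S_r$ ``separates $0$ from $K$'' and appealing to a ``separation/linking principle.'' This should be replaced by the \emph{piercing property} of the $\Z_2$-cohomological index (see, e.g., \cite[Prop.~2.14]{monogr}): if $A$ is a closed symmetric set, $\varphi\colon A\times[0,1]\to W$ is continuous and odd in the first variable, and $C_0,C_1$ are closed symmetric sets with $\varphi(A\times\{0\})\subset C_0\setminus C_1$, $\varphi(A\times\{1\})\subset C_1\setminus C_0$, and $\varphi(A\times[0,1])\subset C_0\cup C_1$, then $i\bigl(\varphi(A\times[0,1])\cap C_0\cap C_1\bigr)\ge i(A)$. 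Here one takes $A=K_0$, $\varphi(u,t)=\gamma(tRu)$, $C_0=\overline{B_r}$, $C_1=W\setminus B_r$, so $C_0\cap C_1=S_r$; the hypotheses are met precisely because $\gamma$ fixes $0$ and $K$ pointwise, and the conclusion is the index bound you want. Merely knowing that each curve $\alpha_u$ meets $S_r$ is, as you note, insufficient; the piercing property is the precise content. Two small further remarks: the fact that $\gamma$ fixes the origin requires $\Phi(0)\le 0$, which is implicit in the abstract framework (and holds in the application since $\Phi(0)=0$), and for the multiplicity when levels coincide you are implicitly using subadditivity and continuity of the index together with the equivariant deformation lemma, which is the right combination.
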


We are now ready to prove Theorem \ref{Theorem 4}.

\begin{proof}[Proof of Theorem \ref{Theorem 4}]
	In view of Theorem \ref{Theorem 2}, we apply Theorem \ref{Theorem 5} with
	\[
	b := \frac{s}{N}\; \alpha_{N,s}(\Omega)^{(N-s)/s}.
	\]
	By Theorem \ref{Theorem 3}, the sublevel set $\Psi^{\lambda_{k+m}}$ has a compact symmetric subset $K_0$ with
	\[
	i(K_0) = k + m.
	\]
	We take $B_0 := \Psi_{\lambda_{k+1}}$, so that
	\[
	i(\M \setminus B_0) = k
	\]
	by \eqref{17}. Let $R > r > 0$ and let $K$, $B$, and $X$ be as in Theorem \ref{Theorem 5}.	
	By Lemma \ref{Lemma 1} \ref{Item 4},
	\[
	\Phi(u) \ge \frac{s}{N} \left(\norm{u}^{N/s} - \lambda \int_\Omega |u|^{N/s}\, dx\right) - \lambda \int_\Omega |u|^{N^2/s(N-s)}\, e^{\, |u|^{N/(N-s)}}\, dx,
	\]
	so for $u \in \Psi_{\lambda_{k+1}}$,
	\begin{align*}
		\Phi(ru) & \ge\  \frac{s r^{N/s}}{N} \left(1 - \frac{\lambda}{\Psi(u)}\right) - \lambda r^{N^2/s(N-s)} \int_\Omega |u|^{N^2/s(N-s)}\, e^{\, r^{N/(N-s)}\, |u|^{N/(N-s)}}\, dx\\[10pt]
		& \ge  r^{N/s}\, \Bigg[\frac{s}{N} \left(1 - \frac{\lambda}{\lambda_{k+1}}\right)\\[5pt]
		&  - \lambda r^{N/(N-s)} \left(\int_\Omega |u|^{2N^2/s(N-s)}\, dx\right)^{1/2} \left(\int_\Omega e^{\, 2r^{N/(N-s)}\, |u|^{N/(N-s)}}\, dx\right)^{1/2}\Bigg].
	\end{align*}
	The first integral in the last expression is bounded since $W^{s,N/s}_0(\Omega) \hookrightarrow L^{2N^2/s(N-s)}(\Omega)$, and the second integral is also bounded if $2r^{N/(N-s)} < \alpha_{N,s}(\Omega)$. Since $\lambda < \lambda_{k+1}$, it follows that $\inf \Phi(B) > 0$ if $r$ is sufficiently small.
	By Lemma \ref{Lemma 1} \ref{Item 5} and the H\"{o}lder inequality,
	\begin{align*}
		\Phi(u) & \le  \frac{s}{N} \norm{u}^{N/s} - \frac{\lambda s\, (N - s)}{N^2} \int_\Omega |u|^{N^2/s(N-s)}\, dx\\[5pt]
		& \le  \frac{s}{N} \norm{u}^{N/s} - \frac{\lambda s\, (N - s)}{N^2\, \vol{\Omega}^{s/(N-s)}} \left(\int_\Omega |u|^{N/s}\, dx\right)^{N/(N-s)},
	\end{align*}
	so for $u \in K_0 \subset \Psi^{\lambda_{k+1}}$,
	\begin{align*}
		\Phi(Ru) & \le  \frac{s R^{N/s}}{N} - \frac{\lambda s\, (N - s)\, R^{N^2/s(N-s)}}{N^2\, \vol{\Omega}^{s/(N-s)}\, \Psi(u)^{N/(N-s)}}\\[10pt]
		& \le  - \frac{s R^{N/s}}{N} \left(\frac{\lambda\, (N - s)\, R^{N/(N-s)}}{\lambda_{k+1}^{N/(N-s)}\, N\, \vol{\Omega}^{s/(N-s)}} - 1\right).
	\end{align*}
	It follows that $\Phi \le 0$ on $K$ if $R$ is sufficiently large.
	By Lemma \ref{Lemma 1} \ref{Item 5} and the H\"{o}lder inequality,
	\begin{align*}
		\Phi(u) & \le  \frac{s}{N} \norm{u}^{N/s} - \lambda \int_\Omega \left[\frac{s}{N}\, |u|^{N/s} + \frac{s\, (N - s)}{N^2}\, |u|^{N^2/s(N-s)}\right] dx\\[5pt]
		& \le  \frac{s}{N} \left(\norm{u}^{N/s} - \lambda \int_\Omega |u|^{N/s}\, dx\right) - \frac{\lambda s\, (N - s)}{N^2\, \vol{\Omega}^{s/(N-s)}} \left(\int_\Omega |u|^{N/s}\, dx\right)^{N/(N-s)},
	\end{align*}
	so for $u \in X$,
	\begin{align*}
		\Phi(u) & \le  \frac{(\lambda_{k+1} - \lambda)\, s}{N} \int_\Omega |u|^{N/s}\, dx - \frac{\lambda s\, (N - s)}{N^2\, \vol{\Omega}^{s/(N-s)}} \left(\int_\Omega |u|^{N/s}\, dx\right)^{N/(N-s)}\\[10pt]
		& \le  \sup_{\rho \ge 0}\, \left[\frac{(\lambda_{k+1} - \lambda)\, s \rho}{N} - \frac{\lambda s\, (N - s)\, \rho^{N/(N-s)}}{N^2\, \vol{\Omega}^{s/(N-s)}}\right]\\[10pt]
		& =  \frac{(\lambda_{k+1} - \lambda)^{N/s}\, s^2\, \vol{\Omega}}{\lambda^{(N-s)/s}\, N^2}.
	\end{align*}
	So
	\[
	\sup \Phi(X) \le \frac{(\lambda_{k+1} - \lambda)^{N/s}\, s^2\, \vol{\Omega}}{\lambda^{(N-s)/s}\, N^2} < \frac{s}{N}\; \alpha_{N,s}(\Omega)^{(N-s)/s}
	\]
	by \eqref{19}. Thus, problem \eqref{1} has $m$ distinct pairs of nontrivial solutions $\pm\, u^\lambda_j,\, j = 1,\dots,m$ such that
	\begin{equation} \label{21}
		0 < \Phi(u^\lambda_j) \le \frac{(\lambda_{k+1} - \lambda)^{N/s}\, s^2\, \vol{\Omega}}{\lambda^{(N-s)/s}\, N^2}
	\end{equation}
	by Theorem \ref{Theorem 5}.	
	To prove that $u^\lambda_j \to 0$ as $\lambda \nearrow \lambda_{k+1}$, it suffices to show that for every sequence $\nu_n \nearrow \lambda_{k+1}$, a subsequence of $v_n := u^{\nu_n}_j$ converges to zero. We have
	\begin{equation} \label{22}
		\Phi(v_n) = \frac{s}{N} \norm{v_n}^{N/s} - \nu_n \int_\Omega F(v_n)\, dx \to 0
	\end{equation}
	by \eqref{21} and
	\begin{equation} \label{23}
		\Phi'(v_n)\, v_n = \norm{v_n}^{N/s} - \nu_n \int_\Omega v_n\, f(v_n)\, dx = 0.
	\end{equation}
	Since $s/N > s\, (N - s)/N^2$, it follows from Lemma \ref{Lemma 1} \ref{Item 1}, \eqref{22}, and \eqref{23} that $\seq{v_n}$ is bounded in $W^{s,N/s}_0(\Omega)$. Hence a renamed subsequence converges to some $v$ weakly in $W^{s,N/s}_0(\Omega)$, strongly in $L^p(\Omega)$ for all $p \in [1,\infty)$, and a.e.\! in $\Omega$. By Lemma \ref{Lemma 1} \ref{Item 3}, \eqref{22}, and \eqref{23},
	\[
	\frac{s^2}{N^2} \int_\Omega |v_n|^{N^2/s(N-s)}\, dx \le \int_\Omega \bigg[\frac{s}{N}\; v_n\, f(v_n) - F(v_n)\bigg]\, dx = \frac{\Phi(v_n)}{\nu_n} \le \frac{\Phi(v_n)}{\lambda_k} \to 0,
	\]
	so
	\[
	\int_\Omega |v|^{N^2/s(N-s)}\, dx = 0
	\]
	and hence $v = 0$. Since $\int_\Omega v_n\, f(v_n)\, dx$ is bounded by \eqref{23}, then 
	$$
	\int_\Omega F(v_n)\, dx \to 0,
	$$ 
	by Lemma \ref{Lemma 2}, so $\norm{v_n} \to 0$ by \eqref{22}.
\end{proof}

\medskip

\bigskip

\end{document}